\documentclass[12pt]{amsart}
\usepackage{amsmath,amsthm,amssymb,color}

\usepackage{graphicx}
\usepackage{bm}
\usepackage{url}
\usepackage{color}
\definecolor{maroon}{rgb}{.69,.188,.376}
\definecolor{darkgreen}{rgb}{0,.5,0}
\definecolor{darkblue}{rgb}{0,0,.5}
\definecolor{magenta}{rgb}{1,0,1}
\definecolor{v1}{RGB}{68,1,84} 
\definecolor{v2}{RGB}{57,86,140}
\definecolor{craneorange}{RGB}{31,150,139}
\definecolor{v3}{RGB}{31,150,139}
\definecolor{craneblue}{RGB}{255,255,255}

\usepackage[mathscr]{euscript}		

\usepackage{color}

\definecolor{vry}{RGB}{253, 231, 37}

\definecolor{vrg}{RGB}{94,201,98}
 
\definecolor{vrdg}{RGB}{33, 145, 140}

\definecolor{vrb}{RGB}{59,82,139}

\definecolor{vrp}{RGB}{68,1,84}

\definecolor{vro}{RGB}{249,142,9}

\definecolor{vrr}{RGB}{188,55,84}

\definecolor{vrnb}{RGB}{13,8,135}

\usepackage{dsfont}
 
\usepackage{psfrag}			
\usepackage[colorlinks=true]{hyperref}
\hypersetup{pdftex, colorlinks=true, linkcolor=maroon, citecolor=maroon, filecolor=blue,urlcolor=blue}

\calclayout

\numberwithin{equation}{section}
 
\usepackage[mathscr]{euscript}		

\newtheorem{thm}{Theorem}[section]
\newtheorem{lem}{Lemma}[section]

\theoremstyle{definition}

\numberwithin{equation}{section}

\newcommand{\be}{\begin{equation}}
\newcommand{\ee}{\end{equation}}

\newcommand{\bes}{\begin{equation*}}
\newcommand{\ees}{\end{equation*}}

\newcommand{\mP}{\mathbb{P}}

\newcommand{\mt}{\boldsymbol{\eta}}

\newcommand{\X}{\mathbf{X}}

\newcommand{\R}{\mathbb{R}}

\newcommand{\Z}{\mathbb{Z}}
\newcommand{\V}{\text{V}}
\newcommand{\HH}{\text{H}}

\newcommand{\N}{\mathbf{N}}

\newcommand{\E}{\mathbb{E}}
\newcommand{\B}{\mathbf{B}}

\newcommand{\bean}{\begin{eqnarray*}}
\newcommand{\eean}{\end{eqnarray*}}

\newcommand{\e}{\boldsymbol{\eta}}

\newcommand{\mf}{\mathbf}

\newcommand{\uv}{\mathbf u}
\newcommand{\W}{\mathbf W}

\newcommand{\vv}{\mathbf v}

\begin{document}

\title[]{Survival of a long random string among hard Poisson traps}
\author{Siva Athreya \and Mathew Joseph \and Carl Mueller}
\address{Siva Athreya\\ International centre for theoretical Sciences\\Survey No. 151, Shivakote,\\
Hesaraghatta Hobli,\\ Bengaluru - 560 089 } \email{athreya@icts.res.in}

\address{Mathew Joseph\\ Statmath Unit\\ Indian Statistical Institute\\ 8th Mile Mysore Road\\ Bangalore 560059
} \email{m.joseph@isibang.ac.in}

\address{Carl Mueller \\Department of Mathematics, University of Rochester, Rochester, NY  14627
}
\email{carl.e.mueller@rochester.edu}

\thanks{S.A. research was funded in part by Department of Atomic Energy grant at  ICTS-RTI4019. M.J. was partially supported by ANRF grant CRG/2023/002667 and a CPDA grant from the Indian Statistical Institute.}

\keywords{heat equation, white noise, stochastic partial differential 
equations, Poisson, hard obstacles, survival probability.}
\subjclass[2010]{Primary, 60H15; Secondary, 60G17, 60G60.}

\begin{abstract} 
In \cite{ajm23}, we gave large-time asymptotic bounds on 
the annealed survival probability of a moving polymer taking values in 
$\R^d, d \geq 1$.   This polymer is a solution of a stochastic heat 
equation driven by additive spacetime white noise on $[0,T] \times [0,J]$, 
in an environment of Poisson traps.  For fixed $J$, the annealed survivial 
probability decays exponentially with rate proportional to $T^{d/(d+2)}$. 
In this work we examine the large $J$ asymptotics of the annealed survival 
probability for any fixed time $T>0$. We prove upper and lower bounds for 
the annealed survival probability in the cases of hard obstacles. Our 
bounds decay exponentially with rate proportional to $J^{d/(d+2)}$. The 
exponents also depend on time  $T >0$. 
\end{abstract}

\maketitle

\section{Introduction}

In \cite{ajm23}, we studied the annealed survival probability of a random 
string in a Poissonian trap environment for large time $T$.  Let 
$(\Omega, {\mathcal F}, \{{\mathcal F}_t\}_{t \geq 0}, \mP_0)$ be a 
filtered probability space on which $\dot{\mf W}=\dot{\mf W}(t,x)$ is a 
$d$-dimensional random vector whose components are i.i.d. two-parameter 
white noises adapted to ${\mathcal F}_t$, for $t\ge0,x\in[0,J]$. We consider a {\it random string} 
$\mf u(t,x)\in\R^d$, which is the solution to the following 
stochastic heat equation (SHE),
\begin{equation}
\label{eq:she}
\begin{split}
\partial_t {\mf u} (t,x) &=\frac{1}{2}\,\partial_x^2{\mf u}(t,x) + \,\dot {\mf  W}
(t,x)  \\
\mf u(0,x)&=\mf u_0(x)
\end{split}
\end{equation}
on the circle $x\in [0,J]$, having endpoints identified, and $t \in [0,T].$ 
The initial profile $\mf u_0$ is assumed to be continuous.  We shall 
will use boldface letters to denote vector-valued quantities.  More precisely, let $(\Omega_1,\mathcal{G},\mP_1)$ be a second
probability space on which is defined a Poisson point process {$\mt$} on 
$\R^d$ with intensity $\nu>0$, given by 
 \begin{equation*}
\mt(\omega_1) = \sum_{i\geq 1}\delta_{ \boldsymbol{\xi}_i(\omega_1)},\quad 
       \omega_1\in\Omega_1,
\end{equation*}
with points $\{\boldsymbol{\xi}_i(\omega_1)\}_{i\ge 1}\subset \R^d$. 
%where $\nu$ is a finite measure on $[0,J]$. 

For each $\mathbf{z}\in\R^d$ and for for $\mathbb{\e}$ as above, we define
$$ \V(\mathbf{z},\e) = \sum_{i \geq 1} \HH(\mathbf{z}-\boldsymbol{\xi_i}),$$
where $\HH: \R^d \rightarrow [0,\infty]$ is a non-negative, measurable function whose support of $\HH$  is contained in the {\it closed} ball $B(\mathbf 0,a)$ of radius $0<a\le 1$ centered at $\mathbf 0$.

On the product space
$ (\Omega\times\Omega_1,
\mathcal{F}\times\mathcal{G},\mP_0\times\mP_1)$ along with the filtration,$\{\mathcal{F}_t\times \mathcal{G}\}_{t\ge0}$, write $\E$ for the expectation with respect to
$\mP:=\mP_0\times\mP_1$,
and $\E_i$ for the expectation with respect to $\mP_i$ for $i=0,1$. The annealed survival probability given by

\begin{equation} \label{eq:survp}
\begin{split}
S^{H,J,\nu}_{T} &=  \E\left[\exp\left(-\int_0^T \int_{0}^J \V\left(\uv(s,x),\e\right)dx ds\right)
\right].
\end{split}
\end{equation}

\cite{ajm23} presents
upper and lower bounds for the survival probability in the cases of hard and soft
obstacles. These bounds found decay exponentially with 
rate proportional to $T^{d/(d+2)}$. More precisely, if we set $J=1$ in 
then for large enough $T>0$, in both the hard and soft obstacle cases we 
have 
\[C_1\exp\left(-C_2 T^{\frac{d}{d+2}}\right) \le S^{\textnormal{H}, 1, \nu}_{T} \le  C_3\exp\left(-C_4 T^{\frac{d}{d+2}}\right).\]
for some constants $C_1,C_2,C_3,C_4 >0$.  This is the same exponent that occurs in the case of
Brownian motion. The constants $C_2,C_4$ also depend on $\nu,d$ and on the length $J$ of
the polymer, with upper and lower bounds having 
different powers of $J$. See \cite[Theorem 1.1 and Theorem 1.3]{ajm23}.

The above question was motivated from the  model of particles performing random diffusive motion in a region containing randomly located traps is known as the trapping problem (see \cite{hol-weiss} for review). Particle motion is typically Brownian motion in $\R^d$ or a random walk in $\Z^d$.  The traps are placed in a Poissonian manner and the particle gets annihilated on encountering a trap. The main question of interest in such models is the ``Survival Probability'' of the particle. We refer the reader to \cite{szn98} and references there in for a review of the problem of Brownian motion among Poissonian obstacles, to \cite{konig} and references there in for a review of  the problem of a random walk in a random potential and to \cite{ads} for a review of Random walks among mobile and immobile traps.

In this article we study the asymptotics of \eqref{eq:survp} when $J
\rightarrow \infty$ with $T>0$ being fixed for hard obstacles.  
\subsection{Main Results}

We will assume that the initial profile $\mf u_0$ is a {\it Brownian bridge} on $[0,J]$. That is $u_0(x) = X_x$ for $x \in [0,J]$ given by, see \cite[Page 22, (7.5)]{bass97},
\begin{equation} \label{eq:Br_Bridge}
X_x = -\int_{0}^{x}\frac{\X_y}{J-y} dy + \B_x
\end{equation}
where $\B_x$ is a standard Brownian motion in $\R^d$.  

We are now ready to state our first theorem.
\begin{thm}[Hard obstacles]
\label{thm1} 
Consider the solution to \eqref{eq:she} with $d\ge 2$ and $T\geq 1$, and let $\nu$ and $a$ be as above.  Assume $\textnormal{H}\equiv\infty$ on $B(\mathbf{0},a)$ and $\textnormal{H}\equiv0$
on $B(\mathbf{0},a)^c$.  Then
\begin{enumerate}
\item[(a)] (Lower bound) There exist positive constants   $C_0, C_1, C_2 $ independent of $T, J$ such that when  $J^{\frac{1}{d+2} -\theta} \ge C_0T^{\frac{1}{4} -\frac{\theta}{2}}$ for some $\theta >0$, 
\be \label{lb:h}
S^{\textnormal{H}, J, \nu}_{T} \ge  C_1\exp\left(-C_2 \left(\frac{J}{\sqrt{T}}\right)^\frac{d}{d+2}
\right). 
\ee
\item[(b)] (Upper bound)
There exist positive constants $C_3, C_4, C_5$ independent of $T, J$ such that for $J >  \sqrt{T} \max \left\{1, \sqrt{\frac{C_3}{ \mid \log a - \frac{1}{4}\log T\mid  }} \right\}$ we have
\begin{align} \label{ub:h}
  S_T^{\HH, J, \nu}& \le \exp\left(-C_4 \left(\frac{J}{\sqrt{T}} \right)^{\frac{d}{d+2} -\frac{C_5\sqrt{ T} }{a^2 \sqrt{\log J - \log \sqrt{T}}}}\right) 
\end{align}
\end{enumerate}
\end{thm}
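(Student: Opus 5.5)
For hard obstacles, survival means the range $R_T:=\{\uv(t,x): t\le T, x\in[0,J]\}$ avoids the $a$-neighbourhood of every Poisson point. Averaging over $\mP_1$ first gives
\[
S^{\HH,J,\nu}_T=\E_0\big[\exp(-\nu\,|R_T^a|)\big],
\]
where $R_T^a$ is the $a$-neighbourhood of the range and $|\cdot|$ denotes Lebesgue measure. Both bounds will be derived from this identity, by analogy with the Donsker--Varadhan/Sznitman argument for Brownian motion. The heuristic is as follows. Over times up to $T$, spatial points at distance much more than $\sqrt T$ decorrelate. So the string behaves like roughly $N:=J/\sqrt T$ nearly independent pieces. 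Each piece is a Brownian-bridge-type path with "variance budget" of order $\sqrt T$ per piece. Comparing with Brownian motion run for time $N$ (in units where each piece is a unit step) then suggests the rate $N^{d/(d+2)}$.

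\textbf{Lower bound.} I will force the whole string to stay inside a ball $B(\mathbf 0,r)$ with $r$ to be optimised, and require this ball to be free of traps. This gives
\[
S\ge e^{-\nu|B(0,r+a)|}\,\mP_0\big(\sup_{t\le T,x\le J}|\uv(t,x)|\le r\big).
\]
To bound the confinement probability, I decompose $[0,J]$ into $N\approx J/\sqrt T$ blocks of length $\sqrt T$. For the initial bridge, confinement in $B(0,r)$ costs about $e^{-cJ/r^2}$ by the usual small-ball estimate for Brownian motion. For the SHE evolution, write $\uv=\int p\,\uv_0+$ noise term. The noise term, restricted to one block, has sup norm of order $T^{1/4}$. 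The blocks are nearly independent, so I will use a Gaussian correlation inequality (Royen) to multiply the probabilities. Each block should stay confined with probability about $e^{-c\sqrt T/r^2}$ once $r\gg T^{1/4}$, giving a total cost $e^{-cJ/r^2}$. Optimising $\nu r^d + J/r^2$ gives $r\asymp J^{1/(d+2)}$ and a cost $J^{d/(d+2)}$. After rescaling by $T$ this becomes $(J/\sqrt T)^{d/(d+2)}$; making the rescaling exact is the point of the Gaussian scaling $\uv(Tt,\sqrt T x)\stackrel{d}{=}T^{1/4}\tilde\uv(t,x)$, which reduces to $T=1$ with length $J/\sqrt T$ and trap radius $a/T^{1/4}$. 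The hypothesis $J^{1/(d+2)-\theta}\ge C_0T^{1/4-\theta/2}$ is what guarantees $r\gg T^{1/4}$, so that the fluctuation scale of the noise fits inside the ball.

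\textbf{Upper bound.} I only need to control the initial and terminal profiles, since $|R_T^a|\ge |\{\uv(T,x)\}^a|$, and it suffices to use the final time. The profile $\uv(T,\cdot)$ is again Gaussian. Locally, it looks like a Brownian path smoothed at scale $\sqrt T$: increments over distances $\gg\sqrt T$ are Brownian-like, while increments over shorter distances are smoothed. So I will sample $\uv(T,\cdot)$ on the grid $k\sqrt T$, $k\le N$. This gives a Gaussian sequence $Y_k$ whose covariance is a perturbation of that of a random walk with Gaussian steps of variance $\asymp\sqrt T$. Then $|R_T^a|\ge|\{Y_k\}^{a}|$, and the Donsker--Varadhan upper bound for the range of a random walk should give $\exp(-cN^{d/(d+2)})$ in the rescaled variables, where the trap radius becomes $a T^{-1/4}$.

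\textbf{Main obstacle.} The main difficulty is the upper bound. First, the walk is not exactly Markov. Second, in rescaled units the trap radius $aT^{-1/4}$ is small relative to the step size. Both effects enter the exponent, which explains the loss term $C_5\sqrt T/(a^2\sqrt{\log N})$ and the condition involving $|\log a-\frac14\log T|$. I would attack this with a coarse-graining/enlargement-of-obstacles argument. Partition space into boxes of side $\ell\approx N^{1/(d+2)}$ and count the boxes the walk visits. To lower bound the volume of the $a$-neighbourhood of the points inside a box, I would use a Gaussian estimate showing that many points cannot cluster within distance $a$, since conditional variances are bounded below by a multiple of $\sqrt T$. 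Choosing the sampling mesh as $\sqrt T$ times a slowly growing factor in $\log N$, to decorrelate the samples, is what I expect produces the $\sqrt{\log J-\log\sqrt T}$ correction.
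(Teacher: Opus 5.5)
\textbf{Part (a).} Your lower bound is the paper's argument: a trap-free ball $B(\mathbf 0,r+a)$, the small-ball cost $e^{-cJ/r^2}$ for the bridge, and the Gaussian correlation inequality over blocks for the noise. Your per-block factor $e^{-c\sqrt T/r^2}$, valid once $r^2/\sqrt T$ is large, is a legitimate substitute for the paper's $(1-e^{-C\alpha^2})^J$ step.

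The error is in the last step, where rescaling is said to turn the cost into $(J/\sqrt T)^{d/(d+2)}$. Optimising $\nu r^d+J/r^2$ in the original variables gives $\nu^{2/(d+2)}J^{d/(d+2)}$, and $T$ never enters. Rescaling cannot change this. Besides $J\mapsto J/\sqrt T$ and $a\mapsto aT^{-1/4}$, it also sends $\nu\mapsto\tilde\nu=\nu T^{d/4}$, and $\tilde\nu^{2/(d+2)}(J/\sqrt T)^{d/(d+2)}=\nu^{2/(d+2)}J^{d/(d+2)}$. The paper's final display makes the same slip, writing $\nu^{2/(d+2)}$ where $\tilde\nu^{2/(d+2)}$ belongs.

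The $T$-dependence claimed in (a) is in fact false. Since $\mathscr S^J_T(a)\supseteq\mathcal X_J(a)$, Girsanov (Lemma \ref{lem:girsanov}) plus the Donsker--Varadhan bound for the Wiener sausage, used as in the proof of Lemma \ref{lem:sau:tail}, give $S^{\HH,J,\nu}_T\le\E_0 e^{-\nu|\mathcal X_J(a)|}\le Ce^{-cJ^{d/(d+2)}}$ with $c$ independent of $T$. Take $\theta=\frac{1}{2(d+2)}$ and $J=KT^{(d+1)/2}$ with $K$ large; this meets the hypothesis of (a). Since $(J/\sqrt T)^{d/(d+2)}=T^{-d/(2(d+2))}J^{d/(d+2)}$, the lower bound claimed in (a) eventually exceeds this upper bound as $T\to\infty$. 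What your argument (and the paper's) actually proves is $S^{\HH,J,\nu}_T\ge C_1\exp(-C_2\nu^{2/(d+2)}J^{d/(d+2)})$.

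\textbf{Part (b).} Here you take a different route from the paper, and as written it is a plan, not a proof. The key Donsker--Varadhan estimate for the sampled sequence is only asserted (``should give''). The obstacles you then fight come from a wrong picture of $\uv(T,\cdot)$. On the circle, the Brownian bridge modulo constants is invariant for $\partial_t\uv=\frac12\partial_x^2\uv+\dot{\W}$: both the centred bridge and the stationary mean-zero part of the solution have covariance $(-\partial_x^2)^{-1}$ on mean-zero functions. Hence $\uv(T,\cdot)-\uv(T,0)$ is again a Brownian bridge, and nothing is smoothed at scale $\sqrt T$. The non-Markov and small-radius issues arise only because you discard the path between grid points.

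Your account of the loss term also does not match its source. In the paper it comes from thinning the $4\Lambda$-separated points $\kappa^{(2)}_i$ to those where $\X$ stays in an $a/16$-ball over a window of length $2c_0\sqrt{\log J}$. This costs $\exp(-Cc_0\sqrt{\log J}/a^2)$ per point. The thinning does two things:
\begin{itemize}
\item it keeps $(G_t*\X)(\lambda_i)$ near $\X_{\kappa^{(3)}_i}$ (Lemma \ref{lem:dconv:cm});
\item it lets the noise be replaced by the independent processes $\vv^{(\Delta)}_i$ (Lemma \ref{lem:N-v}), each of which sweeps volume $\gtrsim a^{d-4\wedge d+\gamma}$ (Lemma \ref{lem:sau:vol}).
\end{itemize}
A binomial large deviation bound then finishes. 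A mesh enlarged by a factor like $\sqrt{\log N}$ would cost only powers of $\log N$, not $\exp(-C_5\sqrt{T\log N}/a^2)$.

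Your one-slice idea does succeed, more simply than the paper, once you use the $t=0$ slice without discretizing. The bound $S^{\HH,J,\nu}_T\le Ce^{-cJ^{d/(d+2)}}$ from part (a) is uniform in $T$. For $J>\sqrt T\ge 1$, the exponent $e$ in (b) satisfies $(J/\sqrt T)^{e}\le J^{d/(d+2)}$. So (b) holds for $J\ge J_0$ with any $C_4<c$. For $\sqrt T<J<J_0$, use $S^{\HH,J,\nu}_T\le e^{-\nu c_d a^d}$ and shrink $C_4$. The constants depend only on $\nu,a,d$, which the statement permits.
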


From Theorem~\ref{thm1} it is easy to see that for a fixed $0< a < 1$ and $T=1$ we have for $J$ large enough,
    $$C_1\exp\left(-C_2 J^{\frac{d}{d+2}}\right) \le S^{\textnormal{H}, J, \nu}_{1} \le  C_3\exp\left(-C_4 J^{\frac{d}{d+2}} \exp\left (-\frac{C_5}{a^2}\sqrt{\log J}\right)\right).$$
for some constants $C_1,C_2,C_3,C_4,C_5 >0$. In the case of hard obstacles we immediately see that the survival of the string is only possible if the string avoids the obstacles. Thus the``sausage of radius $a$ around string up to time $T$'' should be devoid of any Poisson points. Indeed it  is easy to  check using standard properties of the Poisson random variable that
\be \label{eq:pf:hard} S_T^{\HH,J,\nu} = \E\exp\left( -\nu \left|\mathscr{S}^J_{T}(a)\right|\right),\ee
where 
\be \label{eq:u:s} \mathscr{S}^J_{T}(a) =  \mathop{\bigcup}_{\substack{0 \leq s\leq T,\\ 0 \leq y \leq J}} \left\{ \uv(s,y) + B(\mathbf{0},a) \right\}\ee
is the sausage of radius $a$ around $\uv$. Thus Theorem \ref{thm1} also provides bounds on the exponential moments of the volume of the sausage of radius $a$ around the string up to time $T$.

\subsection{Overview of Proof}
We write 
\begin{equation} \uv(t,x) = G_t* \uv_0(x) + \N(t,x), \end{equation}
where for $t \in [0,1]$, $G_t(x), \, x\in [0,J]$ is the heat kernel on $[0,J]$ with periodic boundary conditions is given by
\begin{equation} \label{eq:hk}
 G(t,x) = \sum_{n\in \Z} p(t,x+nJ).
\end{equation}
with
\begin{equation} \label{eq:hkr}
p(t,x)=(2\pi t)^{-1/2}\exp\left(-\frac{x^2}{2t}\right)
\end{equation}
 and 
\begin{equation} \label{eq:noise}
  \N(t,x) = \int_0^t \int_0^J G_{t-s}(x-y) \W(ds dy)\end{equation}
can be regarded as  the white noise integral as in \cite{wals}. We first use a standard scaling argument to reduce the  proof to the case $T=1$.

\[\vv(t,x):= T^{-\frac14} \uv(T t, \sqrt{T}x),\]
defined for $x\in [0,\frac{J}{\sqrt{T}}]$ with endpoints identified, and $t\in [0, 1]$. The initial profile is $\vv(0,x)$  is a Brownian bridge on $\left[0,\frac{J}{\sqrt{T}}\right]$. It was proved in Lemma 2.2 of \cite{athr-jose-muel} that $\vv$ satisfies 
\begin{equation*}
\begin{split}
\partial_t \vv &=\frac12 \partial_x^2 \vv +\dot{\widetilde \W}, \quad t \in [0, 1]\\
\vv(0,x) &= \vv_0(x), \quad x \in [0,\frac{J}{\sqrt{T}}]
\end{split}
\end{equation*}
for some other white noise $\dot{\widetilde \W}$. Now it is easily checked that
\begin{align*}
S_{T}^{\HH, J,\nu} & = \E\left[\exp\left(-\int_0^T \int_{0}^J \sum_{i\ge 1} \HH\left(\uv(s,x)-\boldsymbol{\xi}_i\right) dx ds\right)
\right] \\
& = \E\left[\exp\left(-\int_0^{\frac{T}{J^2}} \int_{0}^1 \sum_{i\ge 1} J^3\HH\left(J^{\frac12} \left(\vv(\tilde s,\tilde x)-\frac{\boldsymbol{\xi}_i}{J^{\frac{1}{2}}}\right)\right) d\tilde x d\tilde s\right)
\right]
\end{align*}

and  that 
\begin{equation}\label{eq:part:scaling}
S_T^{\HH,J, \nu} = S_1^{\tilde \HH, \tilde J, \tilde \nu},
\end{equation}
where 
\begin{align}\label{eq:scaling}
  \tilde {\HH}(\cdot)  =T^{\frac32} \HH\left(T^{\frac14}\cdot\right), \quad
\tilde{\boldsymbol{\xi}}_i =\frac{\boldsymbol{\xi}_i}{T^{\frac14}}, \quad
\tilde \nu = \nu T^{\frac{d}{4}}, \quad
\tilde a  = \frac{a}{T^{\frac14}}, \quad
\tilde T  =1, \mbox{ and } \quad
\tilde J  = \frac{J}{\sqrt T}. \nonumber \\
\end{align}

Thus, to prove our results we will consider \eqref{eq:she} with $T=1$
and arbitrary $J, a, \nu$. Then we will apply the scaling relation
\eqref{eq:part:scaling} to get bounds for $S_T^{H, J,\nu}$ from
$S_1^{\tilde H, \tilde J, \tilde\nu}$.

The key strategy for proving the lower bound for survival probability in Theorem \ref{thm1} is to  obtain  an optimal configuration for the traps ${\mathbf \xi}$ so that the string does not get killed. This configuration has an area free of traps in a ball of radius $\alpha$ around the origin and the string under this potential is made to stay inside this ball till time $T$. The probability of obtaining such a configuration is of the order $\exp(-\nu(\alpha+a)^d)$ and the probability of confinement is of the order $\exp(-C\frac{J}{\alpha^2})$ for suitably large $\alpha$. Optimizing over $\alpha$ yields the lower bound.

The proof of upper bound differs from the classical setting of random
walks or that of Brownian motion in random obstacles.  Potential
theory and spectral methods are not available for stochastic partial
differential equations.  We work from first principles, and this is
primarily the reason that our upper and lower bounds do not completely
match.  The first step (Lemma~\ref{lem:girsanov}) is to note that away
from the end points of the interval $[0,J]$ the Brownian bridge is
comparable with Brownian motion. Then we follow the method as in \cite{ajm23}.

 We a construct stopping times (see \eqref{eq:kappa}) such that the
 Brownian bridge at these time points are separated by at least from
 each other along the additional property that the Brownian Bridge
 stays confined within a ball of radius $\frac{a}{16}$ for a fixed
 period of time (depending on $J$). We then show that there are enough
 such time points away from the endpoints of the interval $[0,J]$ (see
 Lemma~\ref{lem:no:tildekappa}). Then we show that with high probability
 that we can provide a lower bound on the volume of the 
 (non-intersecting) sausage at these time points (see
 Lemma~\ref{lem:sau:vol}). Using these along with \eqref{eq:pf:hard}
 we will complete the proof.

{\bf Convention on constants:} Throughout the paper $c,C$ will denote a positive constant whose value may change from line to line. All other constants will be denoted by $c_0,C_0$, $c_1, C_1$, $\ldots$. They are all positive and their precise values are not important. The dependence of constants on parameters if needed will be indicated, e.g, $C(d)$.

The rest of the paper is organized as follows. In
Section~\ref{sec:prelim} we prove preliminary lemmas stated above and
in Section~\ref{sec:thm1} we prove Theorem~\ref{thm1}.

\section{Preliminaries} \label{sec:prelim}

In this section we prove preliminary results required for the proof of
Theorem~\ref{thm1}. In Section~\ref{sec:be} we prove key estimates on
the Brownian bridge and in Section~\ref{sec:bvs} we prove the volume
of the sausage of around the white noise integral. Some of the
specific steps in the proof follow the same methodology as in
\cite{ajm23} and at such instances we shall refer to the Lemmas
therein whenever they are routine in nature.

\subsection{Brownian Bridge Estimates} \label{sec:be}
We let $\X_x := \uv_0(x),\, x\in [0,J]$. Recall that $\X_x$ is a Brownian 
bridge on $x\in[0,J]$, satisfying \eqref{eq:Br_Bridge}.  
For $0<\alpha<1$, consider the measure
\be \label{eq:measure:girs} \frac{d\tilde{\mP}_0}{d\mP_0} = \exp\left(\int_0^{\alpha J}\frac{\X_y \cdot d\mathbf{B}_y}{(J-y)} -\frac{1}{2}\int_0^{\alpha J}\frac{\|\X_y\|^2}{(J-y)^2}dy\right)=: Z_{\alpha J} .\ee
Under $\tilde{\mP}_0$, $ \X_x= \mathbf{B}_x - \int_0^x \frac{\X_y}{(J-y)} dy $ is a $d$ dimensional Brownian motion. We will need the following bound on the Radon-Nikodym derivative $\tilde{\E}_0\left(Z_{\alpha J}^{-2}\right)$.
\begin{lem}\label{lem:girsanov} There exists and $\alpha_0 \in (0,1)$ such that the following holds for all $\alpha \le \alpha_0$:
\bes
\tilde{\E}_0\left(Z_{\alpha J}^{-2}\right) \le \exp\left(\frac{2\alpha d }{(1-\alpha)^2}\right).
\ees
\end{lem}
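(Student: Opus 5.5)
The plan is to rewrite $Z_{\alpha J}^{-1}$ in terms of the $\tilde{\mP}_0$-Brownian motion $\X$, and then remove the stochastic integral using It\^o's formula. If this works, we get a \emph{deterministic} bound on $Z_{\alpha J}^{-2}$, so no moment computation is needed.

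\emph{Step 1: change variables.} Under $\tilde{\mP}_0$ we have $d\mathbf{B}_y = d\X_y + \frac{\X_y}{J-y}\,dy$ for $y \le \alpha J$. Substituting this into \eqref{eq:measure:girs} gives
\bes
\int_0^{\alpha J}\frac{\X_y\cdot d\mathbf{B}_y}{J-y} = \int_0^{\alpha J}\frac{\X_y\cdot d\X_y}{J-y} + \int_0^{\alpha J}\frac{\|\X_y\|^2}{(J-y)^2}\,dy,
\ees
and therefore
\bes
Z_{\alpha J}^{-1} = \exp\left(-\int_0^{\alpha J}\frac{\X_y\cdot d\X_y}{J-y} - \frac12\int_0^{\alpha J}\frac{\|\X_y\|^2}{(J-y)^2}\,dy\right).
\ees

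\emph{Step 2: It\^o's formula.} Apply It\^o's formula to $f(y,\X_y) = \frac{\|\X_y\|^2}{2(J-y)}$, with $\X$ a $d$-dimensional Brownian motion and $y \in [0,\alpha J]$ (here $J-y \ge (1-\alpha)J > 0$). This gives
\bes
df = \frac{\X_y\cdot d\X_y}{J-y} + \frac{\|\X_y\|^2}{2(J-y)^2}\,dy + \frac{d}{2(J-y)}\,dy .
\ees
Integrate from $0$ to $\alpha J$, using $\X_0 = 0$ and $\int_0^{\alpha J}\frac{dy}{J-y} = \log\frac{1}{1-\alpha}$. The resulting identity is
\bes
-\int_0^{\alpha J}\frac{\X_y\cdot d\X_y}{J-y} - \frac12\int_0^{\alpha J}\frac{\|\X_y\|^2}{(J-y)^2}\,dy = -\frac{\|\X_{\alpha J}\|^2}{2(1-\alpha)J} + \frac d2\log\frac{1}{1-\alpha}.
\ees
The quadratic-variation-type terms cancel exactly. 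Hence $Z_{\alpha J}^{-1} \le (1-\alpha)^{-d/2}$, and so $Z_{\alpha J}^{-2} \le (1-\alpha)^{-d}$ almost surely.

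\emph{Step 3: conclude.} Taking expectations gives
\bes
\tilde{\E}_0\left(Z_{\alpha J}^{-2}\right) \le \exp\left(d\log\tfrac{1}{1-\alpha}\right).
\ees
It remains to use the elementary inequality $\log\frac{1}{1-\alpha} \le \frac{\alpha}{1-\alpha} \le \frac{2\alpha}{(1-\alpha)^2}$, which holds for all $\alpha \in (0,1)$. So in fact any $\alpha_0 \in (0,1)$ works.

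The only delicate point is Step 2: one must check the sign and form of the It\^o correction. The $\frac{d}{2(J-y)}$ term comes from the Laplacian of $\frac{\|x\|^2}{2(J-y)}$, and the time-derivative term is $\frac{\|x\|^2}{2(J-y)^2}$. One must also confirm that Girsanov applies, i.e.\ that $Z_{\alpha J}$ is a true martingale on $[0,\alpha J]$. Novikov's criterion is awkward here, but the explicit formula obtained above shows $Z$ is bounded below and of the right form. Alternatively, it suffices to note that the drift is bounded by $\|\X_y\|/((1-\alpha)J)$, which is linear growth, so the standard Bene\v{s} criterion applies.

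If the exact cancellation in Step 2 failed, I would fall back on Cauchy--Schwarz to split off an exponential martingale. I would then bound the remaining factor $\tilde{\E}_0\exp\left(c\int_0^{\alpha J}\|\X_y\|^2(J-y)^{-2}\,dy\right)$ via Jensen's inequality in $y$, together with the Gaussian identity $\E e^{\lambda\|\X_y\|^2} = (1-2\lambda y)^{-d/2}$. This would only require $\alpha$ small.
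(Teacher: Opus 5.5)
Your proof is correct, and it takes a genuinely different and sharper route than the paper. The paper never finds $Z_{\alpha J}$ in closed form. It writes $Z_{\alpha J}^{-2}$ in terms of the $\tilde{\mP}_0$-Brownian motion and uses Cauchy--Schwarz to split off an exponential martingale. It then bounds the leftover factor $\tilde{\E}_0\exp\bigl(6\int_0^{\alpha J}X_y^2(J-y)^{-2}\,dy\bigr)$ by slicing on $\sup_{s\le \alpha J}|X_s|$ and using Gaussian tails; this is where smallness of $\alpha$ enters. Your It\^o computation instead turns the stochastic integral into an exact boundary term, giving $Z_{\alpha J}^{-1}=(1-\alpha)^{-d/2}\exp\bigl(-\|\X_{\alpha J}\|^2/(2(1-\alpha)J)\bigr)$. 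This is the classical density of the Brownian bridge with respect to Brownian motion on $[0,\alpha J]$, a ratio of heat kernels.

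What this buys:
\begin{itemize}
\item a pathwise bound valid for every $\alpha\in(0,1)$, and in fact the exact value $\tilde{\E}_0(Z_{\alpha J}^{-2})=(1-\alpha^2)^{-d/2}$;
\item the stated exponent $2\alpha d/(1-\alpha)^2$ itself, which the paper's slicing argument does not literally reach: its first term alone contributes $\exp\bigl(3\alpha d/(1-\alpha)^2\bigr)$ after raising to the power $d/2$, so as written it only gives a bounded constant for small $\alpha$;
\item later changes of measure without Cauchy--Schwarz, e.g.\ in Lemma~\ref{lem:sau:tail} one gets $\mP_0(A)\le(1-\alpha)^{-d/2}\tilde{\mP}_0(A)$ directly;
\item a precise view of where smallness of $\alpha$ is really needed, e.g.\ $\tilde{\E}_0(Z_{\alpha J}^{4})<\infty$ iff $\alpha<1/5$.
\end{itemize}
The paper's approach is more robust in one respect: it does not depend on an exact cancellation, so it would adapt to drifts with no explicit density.

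One small point. Your remark that the explicit formula shows $Z$ is ``bounded below'' does not by itself make $Z$ a true martingale. The clean fix is to run the same It\^o identity under $\mP_0$. This gives $Z_{\alpha J}=(1-\alpha)^{d/2}\exp\bigl(\|\X_{\alpha J}\|^2/(2(1-\alpha)J)\bigr)$ with $\X_{\alpha J}\sim N(0,\alpha(1-\alpha)J\,I_d)$. Hence $\E_0 Z_{\alpha J}=1$, and the nonnegative local martingale $Z$ is a martingale on $[0,\alpha J]$. Your Bene\v{s} alternative also works once you note that $\sup_{y\le x}\|\X_y\|\le C(\alpha)\sup_{y\le x}\|\mathbf{B}_y\|$ for $x\le\alpha J$.
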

\begin{proof}
First note that
\begin{align*}
\tilde{\E}_0\left(Z_{\alpha J}^{-2}\right) &= \tilde{\E}_0 \exp\left(-2\int_0^{\alpha J}\frac{\X_y \cdot d\mathbf{B}_y}{(J-y)} +\int_0^{\alpha J}\frac{\|\X_y\|^2}{(J-y)^2}dy \right) \\
&=  \tilde{\E}_0 \exp\left(-2\int_0^{\alpha J}\frac{\X_y \cdot 
  d\mathbf{X}_y}{(J-y)} -\int_0^{\alpha J}\frac{\|\X_y\|^2}{(J-y)^2}dy \right).  \\
& = \left[ \tilde{\E}_0 \exp\left(-2\int_0^{\alpha J}\frac{\text{X}_y d\text{X}_y}{(J-y)}- \int_0^{\alpha J}\frac{\text{X}_y^2}{(J-y)^2}dy \right)\right]^d \\
\end{align*}
where $X$ be the first coordinate of $\text{X}$.  
Using the Cauchy-Schwarz inequality, we continue as follows.  
\be\label{eq:poa:1}
\begin{split}
\tilde{\E}_0\left(Z_{\alpha J}^{-2}\right)
& \leq \left[ \tilde{\E}_0 \exp\left(-4\int_0^{\alpha J}\frac{\text{X}_y d\text{X}_y}{(J-y)} -8\int_0^{\alpha J}\frac{\text{X}_y^2}{(J-y)^2}dy \right)\right]^{\frac{d}{2}} \\
& \hspace{1.1cm} \cdot\left[ \tilde{\E}_0 \exp\left(6\int_0^{\alpha J}\frac{\text{X}_y^2}{(J-y)^2}dy \right)\right]^{\frac{d}{2}} \\
&=\left[ \tilde{\E}_0 \exp\left(6\int_0^{\alpha J}\frac{\text{X}_y^2}{(J-y)^2}dy \right)\right]^{\frac{d}{2}},
\end{split}
\ee
The last equality follows due to the fact that we have an exponential martingale, since under $\tilde{\mP}_0$, $X$ is a Brownian motion. We now bound the last expectation inside the square brackets. We write 
\begin{align*}
& \tilde{\E}_0 \exp\left(6\int_0^{\alpha J}\frac{\text{X}_y^2}{(J-y)^2}dy \right) \\
& = \tilde{\E}_0 \left[\exp\left(6\int_0^{\alpha J}\frac{\text{X}_y^2}{(J-y)^2}dy \right) ; \left\{\sup_{s\le \alpha J}|\text{X}_s|\le J^{\frac12}\right\} \right] \\
&\hspace{1cm} +\sum_{k=1}^\infty\tilde{\E}_0 \left[\exp\left(6\int_0^{\alpha J}\frac{\text{X}_y^2}{(J-y)^2}dy \right) ; \left\{k J^{\frac12+\frac{d}{2(d+2)}}\le \sup_{s\le \alpha J}|\text{X}_s|\le (k+1) J^{\frac12}\right\} \right]
\end{align*}
The first term is bounded by 
\[ \tilde{\E}_0 \left[\exp\left(6\int_0^{\alpha J}\frac{\text{X}_y^2}{(J-y)^2}dy \right) ; \left\{\sup_{s\le \alpha J}|\text{X}_s|\le J^{\frac12}\right\} \right] \le \exp\left(6\frac{\alpha }{(1-\alpha)^2}\right).\]
As for the second term we have the bound
\begin{align*}
&\sum_{k=1}^\infty\tilde{\E}_0 \left[\exp\left(6\int_0^{\alpha J}\frac{\text{X}_y^2}{(J-y)^2}dy \right) ; \left\{k J^{\frac12}\le \sup_{s\le \alpha J}|\text{X}_s|\le (k+1)J^{\frac12}\right\} \right] \\
&\le \sum_{k=1}^{\infty} \exp\left(6\frac{\alpha(k+1)^2}{(1-\alpha)^2}\right) \cdot \tilde{\mP}_0\left(\sup_{s\le  J}|\text{X}_s| \ge k J^{\frac12}\right) \\
&\le \sum_{k=1}^{\infty} \exp\left(6\frac{\alpha(k+1)^2}{(1-\alpha)^2}\right) \exp\left(-\frac{k^2}{2}\right).
\end{align*}
If $\alpha$ is small enough then the first term in the above sum is the dominant term. 
\end{proof}

We will consider a sausage of radius $\Lambda$ around $\X$:
\[ \mathcal X_J(\Lambda) := \bigcup_{0\le x\le J}\left\{\X_x + B(0, \Lambda)\right\}.\]

We will need the following result 
\begin{lem} \label{lem:sau:tail} Let $d\ge 2$ and $\Lambda\ge 1$. There exist  $\alpha_3\in (0,1)$ such that for all $\alpha \le \alpha_3$ the following holds. There exist  $c_1\equiv c_1(d,\alpha,\Lambda)$ and  $C_6\equiv C_6(d,\alpha,\Lambda)$  such that for $J\ge 1$ we have 
\[ \mP_0\left( |\mathcal{X}_{\alpha J}(\Lambda)|\le c_1 J^{\frac{d}{d+2}}\right) \le \exp \left(-C_6 J^{\frac{d}{d+2}}\right).\]
\end{lem}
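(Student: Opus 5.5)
We reduce to Brownian motion using Lemma~\ref{lem:girsanov} and then invoke a large deviation estimate for the volume of the Wiener sausage. Let $A$ be the event $\{|\mathcal{X}_{\alpha J}(\Lambda)|\le c_1 J^{\frac{d}{d+2}}\}$. The event $A$ depends only on $\{\X_x: x\le \alpha J\}$. By Cauchy--Schwarz under $\tilde{\mP}_0$,
\[
\mP_0(A)=\tilde{\E}_0\left[Z_{\alpha J}^{-1}\mathbf 1_A\right]\le \left(\tilde{\E}_0 Z_{\alpha J}^{-2}\right)^{\frac12}\tilde{\mP}_0(A)^{\frac12}\le \exp\left(\frac{\alpha d}{(1-\alpha)^2}\right)\tilde{\mP}_0(A)^{\frac12},
\]
valid for $\alpha\le\alpha_0$ by Lemma~\ref{lem:girsanov}. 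Under $\tilde{\mP}_0$ the path $\{\X_x\}_{x\le \alpha J}$ is a standard $d$-dimensional Brownian motion. It therefore suffices to show that the Wiener sausage $W_t(\Lambda)=\bigcup_{s\le t}\{\B_s+B(0,\Lambda)\}$ with $t=\alpha J$ satisfies
\[
\mP\left(|W_t(\Lambda)|\le c\,t^{\frac{d}{d+2}}\right)\le \exp\left(-C t^{\frac{d}{d+2}}\right).
\]
The prefactor $\exp(\alpha d/(1-\alpha)^2)$ is a constant, and it can be absorbed for $J\ge 1$ by shrinking $C_6$. This works because the probability is trivially at most $1$, and $J^{d/(d+2)}\ge1$.

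For the Brownian estimate, I would use the classical van den Berg--Bolthausen--den Hollander result on downward deviations of the Wiener sausage in $d\ge2$, which says $\log \mP(|W_t|\le b t^{d/(d+2)})\sim -I(b)t^{d/(d+2)}$. If a self-contained argument is preferred, the elementary route is as follows. If $|W_t(\Lambda)|\le c t^{d/(d+2)}$, then, since the sausage is a connected set containing a ball around each path point, a covering argument shows the path visits at most $N\lesssim c\,t^{d/(d+2)}$ cubes of side $\Lambda$. The path must spend time $t$ inside a set $U$ of volume $\le c t^{d/(d+2)}$. For any fixed open set $U$ of volume $V$, the Faber--Krahn inequality gives
\[
\mP(\B_s\in U\ \forall s\le t)\le C(1+t\lambda_U)^{d/2}e^{-\lambda_U t}, \qquad \lambda_U\ge \lambda_{\mathrm{ball}}V^{-2/d}.
\]
With $V=ct^{d/(d+2)}$ this yields $\exp(-\lambda c^{-2/d}t^{d/(d+2)})$. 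The number of connected lattice animals of $N$ cubes containing the starting cube is at most $e^{KN}=e^{Kc t^{d/(d+2)}}$.

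The final step is the union bound. For small $c$ we have $\lambda c^{-2/d}\gg Kc$, so summing over animals gives the claim with $C=\tfrac12\lambda c^{-2/d}$. Here $c_1=c\,\alpha^{d/(d+2)}$ and $C_6=\tfrac12 C\alpha^{d/(d+2)}$ minus the absorption loss. The constants depend on $\alpha,\Lambda,d$ as stated, and we take $\alpha_3=\alpha_0$.

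The main obstacle is the discretization step. We must ensure that the $\Lambda$-sausage having small volume really forces the path into a union of few lattice cubes. For this, replace $U$ by the union of cubes of side $\Lambda/\sqrt d$ meeting the path; each such cube lies in the sausage, so their total volume is at most $|W_t(\Lambda)|$. We must also justify that the eigenvalue bound is applied to an open set from a countable family. Both points are handled by this cube enlargement together with the counting of connected animals, since consecutive visited cubes are adjacent by continuity.
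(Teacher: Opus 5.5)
Your proposal is correct and is essentially the paper's argument: Cauchy--Schwarz under $\tilde{\mP}_0$ with Lemma~\ref{lem:girsanov} reduces the bridge to Brownian motion, and the Wiener-sausage lower tail on the scale $t^{d/(d+2)}$ is then quoted. The paper takes that tail from Donsker--Varadhan's exponential-moment asymptotics (van den Berg--Bolthausen--den Hollander treat the different scale $|W_t|\le bt$), and your Faber--Krahn plus lattice-animal coarse-graining is a valid self-contained replacement for that citation. One small correction: the prefactor $e^{\alpha d/(1-\alpha)^2}$ is not absorbed merely because the probability is at most $1$; instead shrink $c_1$ so that the rate you obtain for $\tilde{\mP}_0(A)^{1/2}$ (which grows like $c_1^{-2/d}$) is at least $2\alpha d/(1-\alpha)^2$, and then take $C_6$ to be half of that rate.
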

\begin{proof} 
  From \cite{dons-vara}, we know that there exist constants $c, C>0$ depending on $d,\alpha$ and $\Lambda$ such that 
\[ \mP_0\left( |\bar{\mathcal{X}}_{\alpha J}(\Lambda)|\le c J^{\frac{d}{d+2}}\right) \le \exp \left(-C J^{\frac{d}{d+2}}\right), \]
where 
\[ \bar{\mathcal X}_{\alpha J}(\Lambda) := \bigcup_{0\le x\le \alpha J}\left\{\mathbf{B}_x + B(0, \Lambda)\right\}.\]
We now apply a Girsanov change of measure argument. Consider the measure $\tilde{\mP}_0$ defined in \eqref{eq:measure:girs}, and let 
\[ A^{(\alpha)}:=\left\{|\mathcal{X}_{\alpha J}(\Lambda)|\le c J^{\frac{d}{d+2}}\right\}.\]
Since $\X_x$ is a Brownian motion under $\tilde{\mP}_0$ 
\be \label{eq:tpoa:ubd} \tilde{\mP}_0(A^{(\alpha)})  \le  \exp\left(-C J^{\frac{d}{d+2}}\right).\ee
We again use
\begin{align*}
\mP_0(A^{(\alpha)}) &= \tilde{\E}_0\left(\frac{1}{Z_{\alpha J}}; A^{(\alpha)}\right) \\
&\le \sqrt{\tilde{\mP}_0(A^{(\alpha)})}\sqrt{ \tilde{\E}_0\left(Z_{\alpha J}^{-2}\right)}.
\end{align*}
We now apply Lemma \ref{lem:girsanov} to complete the proof. 
\end{proof}
Let $\kappa^{(1)}_0=0$ and consider (random) points $\{\kappa^{(1)}_i\}_{i \geq 1}$ defined inductively by
\[ \kappa^{(1)}_{i+1} =\inf\left\{x>\kappa^{(1)}_i: \text{dist}\left(\X_x,\, \bigcup_{k=0}^i \X_{\kappa^{(1)}_k}\right)\ge 4\Lambda\right\},\]
where dist$(\cdot, \cdot)$ is the Hausdorff distance between sets. Define,
\[ N^{(1)}_{\alpha J}:= \left\vert\left\{i\ge 1: \kappa^{(1)}_i \le \alpha J\right\} \right\vert\]

\begin{lem}\label{lem:disjoint:ball}
Let $d\ge 2$ and $\Lambda\ge 1$. The following holds for $\alpha \le \alpha_3$ (from Lemma \ref{lem:sau:tail}): there exists $c_2\equiv c_2(d,\alpha,\Lambda)$ and  $C_7\equiv C_7(d,\alpha,\Lambda)$  such that for $J\ge 1$ we have 
\[  \mP_0\left(N^{(1)}_{\alpha J}\le c_2J^{\frac{d}{d+2}}\right) \le \exp\left(-C_7 J^{\frac{d}{d+2}}\right).\]
\end{lem}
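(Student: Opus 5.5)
The plan is to compare the number of separated points $N^{(1)}_{\alpha J}$ with the volume of the sausage $\mathcal X_{\alpha J}(\Lambda)$, and then invoke Lemma~\ref{lem:sau:tail}. The key deterministic observation is a covering statement. Let $i^*$ be the number of indices $i\ge 0$ with $\kappa^{(1)}_i\le \alpha J$, so $i^* = N^{(1)}_{\alpha J}+1$. Then for every $x\in[0,\alpha J]$ there is some $k$ with $\kappa^{(1)}_k\le \alpha J$ and $\|\X_x-\X_{\kappa^{(1)}_k}\|\le 4\Lambda$.

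To see this, take $x\in[\kappa^{(1)}_i,\kappa^{(1)}_{i+1})$ (or $x\ge \kappa^{(1)}_{i^*-1}$ for the last block). By the definition of $\kappa^{(1)}_{i+1}$ as an infimum, and by continuity of $\X$, we have $\text{dist}(\X_x,\bigcup_{k\le i}\X_{\kappa^{(1)}_k})<4\Lambda$ for all such $x$. Here the distance from a point to a finite set is just the minimum over $k$ of $\|\X_x-\X_{\kappa^{(1)}_k}\|$.

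Consequently
\[
\mathcal X_{\alpha J}(\Lambda)\subset \bigcup_{k=0}^{N^{(1)}_{\alpha J}} B\left(\X_{\kappa^{(1)}_k},5\Lambda\right),
\]
and therefore
\[
|\mathcal X_{\alpha J}(\Lambda)|\le \left(N^{(1)}_{\alpha J}+1\right)\omega_d(5\Lambda)^d,
\]
where $\omega_d$ is the volume of the unit ball.

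Now set $c_2:=c_1/(2\omega_d(5\Lambda)^d)$, with $c_1=c_1(d,\alpha,\Lambda)$ from Lemma~\ref{lem:sau:tail}. On the event $\{N^{(1)}_{\alpha J}\le c_2J^{d/(d+2)}\}$ we get
\[
|\mathcal X_{\alpha J}(\Lambda)|\le \omega_d(5\Lambda)^d+\tfrac{c_1}{2}J^{\frac{d}{d+2}}.
\]
This is at most $c_1J^{\frac{d}{d+2}}$ as soon as $J\ge J_0(d,\alpha,\Lambda)$. Hence for $J\ge J_0$, Lemma~\ref{lem:sau:tail} gives
\[
\mP_0\left(N^{(1)}_{\alpha J}\le c_2J^{\frac{d}{d+2}}\right)\le \mP_0\left(|\mathcal X_{\alpha J}(\Lambda)|\le c_1J^{\frac{d}{d+2}}\right)\le \exp\left(-C_6J^{\frac{d}{d+2}}\right).
\]

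For $1\le J<J_0$ the exponent $C_7J^{d/(d+2)}$ is bounded. There I would shrink $C_7$, taking $C_7\le C_6$ and small enough that $\exp(-C_7J_0^{d/(d+2)})\ge$ the probability in question. Alternatively, shrink $c_2$ so that $c_2J_0^{d/(d+2)}<1$. Then the event forces $N^{(1)}_{\alpha J}=0$, i.e.\ $\X$ stays in $B(\X_0,4\Lambda)$ for $x\le\alpha J$. On the range $1\le J\le J_0$ this has probability bounded away from $1$ uniformly, because $\alpha J\ge\alpha$. Choosing $C_7$ small then absorbs this case.

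The only real subtlety, and the step I expect to need care, is the covering claim. One must check the infimum definition: between consecutive $\kappa$'s the path stays within $4\Lambda$ of the previously chosen points. One must also handle the final segment after the last $\kappa^{(1)}_i\le\alpha J$. Both follow from continuity of the bridge and the fact that $\kappa^{(1)}_{i+1}>\alpha J$ for the last index. Everything else is a direct reduction to Lemma~\ref{lem:sau:tail}.
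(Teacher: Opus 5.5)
Your proposal is correct and follows the route the paper intends. The paper only defers to Lemma 3.2 of \cite{ajm23}, and the natural reading of that argument is yours: cover the sausage $\mathcal X_{\alpha J}(\Lambda)$ by the $N^{(1)}_{\alpha J}+1$ balls of radius $5\Lambda$ centred at the points $\X_{\kappa^{(1)}_k}$, then invoke Lemma~\ref{lem:sau:tail}. Your separate treatment of bounded $J$ fills in a detail the paper leaves implicit: you shrink $c_2$ so the event forces $N^{(1)}_{\alpha J}=0$, and that event has probability uniformly bounded away from $1$.
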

\begin{proof}
The proof follows similarly to that of Lemma 3.2 in \cite{ajm23}.
  \end{proof}

Set  $\alpha_*=\alpha_1\wedge \alpha_2\wedge \alpha_3$ where the $\alpha_i$ are given in Lemma \ref{lem:girsanov} and Lemma \ref{lem:sau:tail}. Let $\kappa^{(2)}_0 =\frac{\alpha_*J}{4}$,
\[ \kappa^{(2)}_{i+1}=\inf\left\{x>\kappa^{(2)}_i: \text{dist}\left(\X_x,\, \bigcup_{k=0}^i \X_{\kappa^{(2)}_k}\right)\ge 4\Lambda\right\}, \]
and
\[ N^{(2)}_{\alpha_* J} =\left\vert\left\{i\ge 1: \kappa^{(2)}_i \le \frac{3\alpha_* J}{4}\right\} \right\vert. \]
{The following lemma is then  immediate.}
\begin{lem} \label{lem:no:tildekappa}
Let $d\ge 2$ and $\Lambda\ge 1$. The following holds: there exists $c_3\equiv c_3(d,\alpha_*,\Lambda)$ and  $C_8\equiv C_8(d,\alpha_*,\Lambda)$ such that for $J\ge 1$ we have 
\[  \mP_0\left(N^{(2)}_{\alpha_* J}\le c_3J^{\frac{d}{d+2}}\right) \le \exp\left(-C_8 J^{\frac{d}{d+2}}\right).\]
\end{lem}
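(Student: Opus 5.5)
The plan is to reduce Lemma \ref{lem:no:tildekappa} to Lemma \ref{lem:disjoint:ball}, using the Markov property of the Brownian bridge at the deterministic point $x_0:=\frac{\alpha_*J}{4}$ together with the Girsanov comparison of Lemma \ref{lem:girsanov}. The sequence $\kappa^{(2)}$ is the same greedy construction as $\kappa^{(1)}$, started at $x_0$ instead of $0$. It runs over the window $[x_0,\frac{3\alpha_*J}{4}]$, which has length $\frac{\alpha_*J}{2}$.

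First I would transfer the problem to Brownian motion. On $[0,\alpha_*J]$ the change of measure \eqref{eq:measure:girs} with $\alpha=\alpha_*$ makes $\X$ a standard Brownian motion under $\tilde{\mP}_0$. The event
\[ A:=\left\{N^{(2)}_{\alpha_*J}\le c_3J^{\frac{d}{d+2}}\right\} \]
is measurable with respect to $\{\X_x: x\le \frac{3\alpha_*J}{4}\}\subset\{\X_x:x\le\alpha_*J\}$. Cauchy--Schwarz then gives, exactly as in Lemma \ref{lem:sau:tail},
\[ \mP_0(A)\le \sqrt{\tilde{\mP}_0(A)}\,\sqrt{\tilde{\E}_0\left(Z_{\alpha_*J}^{-2}\right)}. \]
The second factor is a constant, by Lemma \ref{lem:girsanov}, since $\alpha_*\le\alpha_1$.

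Second, I would bound $\tilde{\mP}_0(A)$. Under $\tilde{\mP}_0$, the process $\mathbf{W}_y:=\X_{x_0+y}-\X_{x_0}$, $y\ge0$, is again a Brownian motion, by the Markov property and translation invariance. The definition of $\kappa^{(2)}$ involves only differences of $\X$ (distances from the points $\X_{\kappa^{(2)}_k}$). Hence the points $\kappa^{(2)}_i-x_0$ are exactly the $\kappa^{(1)}$-points built from $\mathbf{W}$, and $N^{(2)}_{\alpha_*J}$ equals the number of $\kappa^{(1)}$-points of $\mathbf{W}$ in $[0,\frac{\alpha_*J}{2}]$.

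It then suffices to have the Brownian motion version of Lemma \ref{lem:disjoint:ball} on an interval of length $\frac{\alpha_*}{2}J$. The proof of that lemma, following Lemma 3.2 of \cite{ajm23}, is done for Brownian motion via Lemma \ref{lem:sau:tail} and the Donsker--Varadhan estimate \cite{dons-vara}. The only input is that the sausage volume $|\bar{\mathcal X}_{\alpha J}(\Lambda)|$ is at least $cJ^{d/(d+2)}$ off an event of probability $e^{-CJ^{d/(d+2)}}$. The link to $N$ is that every point $\X_x$ lies within $4\Lambda$ of some $\X_{\kappa_k}$, so the sausage is covered by $N+1$ balls of radius $5\Lambda$. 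Applying this with $\alpha=\alpha_*/2$ gives
\[ \tilde{\mP}_0(A)\le \exp\left(-CJ^{\frac{d}{d+2}}\right) \quad\text{for } c_3 \text{ small.} \]
Combining the two steps yields the claim with $C_8=C/2$, after absorbing the Girsanov constant. This is possible for $J\ge1$ by shrinking $C_8$, since $J^{d/(d+2)}\ge1$.

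The only point needing care is the measurability and shift-invariance step. One has to check that the greedy construction really depends only on the increments after $x_0$, and that the Radon--Nikodym factor is taken over the full window $[0,\alpha_*J]$, which contains $[x_0,\frac{3\alpha_*J}{4}]$. I do not expect a genuine obstacle there, which is consistent with the paper calling the lemma immediate.
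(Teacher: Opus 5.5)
Your proposal is correct and is essentially the argument the paper leaves implicit when it calls the lemma immediate: Girsanov comparison on $[0,\alpha_*J]$ as in Lemma \ref{lem:sau:tail}, the Markov shift to $\frac{\alpha_*J}{4}$ (valid because the greedy construction only sees increments), and the covering of the $\Lambda$-sausage by $N+1$ balls of radius $5\Lambda$ from the proof of Lemma \ref{lem:disjoint:ball}. One bookkeeping point is loose: shrinking $C_8$ alone cannot absorb the Girsanov factor near $J=1$ when that factor is at least $e^{C/2}$, so for $J$ in a bounded range you must separately check that $\mP_0\big(N^{(2)}_{\alpha_*J}\le c_3J^{\frac{d}{d+2}}\big)$ stays bounded away from $1$ (for instance by taking $c_3$ small enough that the event there is just $\{N^{(2)}_{\alpha_*J}=0\}$).
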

Fix $c_0\equiv c_0(J,a)>0  \text{ (to be determined later)}$. Consider all 
subsequences $\{\kappa^{(2)}_{i_j}\}_{j \geq 1}$ such that $|\X_x- \X_{\kappa^{(2)}_{i_j}}|\le \frac{a}{16}$ for all $x\in [\kappa^{(2)}_{i_j}, \kappa^{(2)}_{i_j}+2c_0\sqrt{\log J}]$.
\begin{equation} \label{eq:kappa}
\mbox{\parbox{5in}{
Let $\{{\kappa_{i_j}^{(3)}} \}_{j=1, \ldots,\#(\alpha_*J)}$ be the longest subsequence with the above property, with $N^{(3)}_{\alpha_*J}$ being the (maximal) number of elements.
}}
\end{equation}
\begin{lem} \label{lem:barkappa}
Let $d\ge 2$ and $\Lambda\ge 1$. There exists $c_4\equiv c_4(d,\alpha_*,\Lambda), c_5\equiv c_5(d,\alpha_*,\Lambda)$ and $C_9\equiv C_9(d,\alpha_*,\Lambda)$ such that for all $J\ge 1$
\[  \mP_0\left(\#(\alpha_* J)\le \frac{c_4J^{\frac{d}{d+2}} }{\exp\left(\frac{c_5 \sqrt{\log J}}{a^2}\right)}\right) \le \exp \left (C_9 J^{\frac{d}{d+2} -\frac{c_2}{a^2 \sqrt{\log J} }} \right) \exp\left(-\frac{C_9 J^{\frac{d}{d+2}} }{\exp\left(\frac{c_5 \sqrt{\log J}}{a^2}\right)}\right).\]
\end{lem}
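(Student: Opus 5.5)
We use Lemma~\ref{lem:no:tildekappa} together with a binomial-type large deviation estimate for the number of ``good'' points among the $\kappa^{(2)}_i$. Work on the event $G:=\{N^{(2)}_{\alpha_*J} > c_3 J^{\frac{d}{d+2}}\}$, whose complement has probability at most $\exp(-C_8J^{\frac{d}{d+2}})$; this term is dominated by the right-hand side of the lemma. Call $\kappa^{(2)}_i$ \emph{good} if $\sup_{x\in[\kappa^{(2)}_i,\kappa^{(2)}_i+2c_0\sqrt{\log J}]}|\X_x-\X_{\kappa^{(2)}_i}|\le a/16$. Then $\#(\alpha_*J)$ is at least the number of good indices $i\le N^{(2)}_{\alpha_*J}$.

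First, lower bound the success probability of a single index. Each $\kappa^{(2)}_i$ is a stopping time for the filtration of $\X$. Under $\tilde\mP_0$ the process is a Brownian motion, so by the strong Markov property the post-$\kappa^{(2)}_i$ increment is a fresh Brownian motion. The small-ball estimate then gives
\[
p:=\mP\Big(\sup_{s\le 2c_0\sqrt{\log J}}|\B_s|\le a/16\Big)\ge c\exp\Big(-\frac{C c_0\sqrt{\log J}}{a^2}\Big),
\]
using $\lambda_1$ for the ball of radius $a/16$, which is of order $a^{-2}$. Setting $c_5:=Cc_0$, this has the form $\exp(-c_5\sqrt{\log J}/a^2)$. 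The windows of consecutive $\kappa$'s may overlap, so the events are not independent. To handle this, keep only every $m$-th index with $m\approx 2c_0\sqrt{\log J}$, or note that the gaps between consecutive $\kappa$'s exceed the window on the good event since the path must travel distance $4\Lambda > a/16$. Either way, the successive trials become conditionally independent given the past, each with success probability at least $p$. The sparsification costs a factor $\sqrt{\log J}$, which can be absorbed into the exponential by enlarging $c_5$. This absorption is what produces the extra factor $\exp(C_9J^{\frac{d}{d+2}-\frac{c}{a^2\sqrt{\log J}}})$-type slack in the statement.

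Next, compare with a binomial and apply Chernoff. Conditionally on $G$, the number of good indices stochastically dominates $\mathrm{Bin}(n,p)$ with $n=c_3J^{\frac{d}{d+2}}/\sqrt{\log J}$. Hence the probability that it falls below $np/2$ is at most $\exp(-np/8)$. This gives the bound $\exp\big(-C_9J^{\frac{d}{d+2}}e^{-c_5\sqrt{\log J}/a^2}\big)$, with $c_4=c_3/2$ after absorbing the $\sqrt{\log J}$ loss.

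Finally, transfer from $\tilde\mP_0$ back to $\mP_0$. Since $\kappa^{(2)}$ lives in $[\alpha_*J/4,3\alpha_*J/4]$, plus windows of length $O(\sqrt{\log J})\ll J$, the whole event is measurable with respect to $\X$ on $[0,\alpha_*J]$. We then use Cauchy--Schwarz with Lemma~\ref{lem:girsanov}, as in Lemma~\ref{lem:sau:tail}, at the cost of taking square roots, which only changes constants. Adding the complement of $G$ completes the bound.

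The main obstacle is the dependence and overlap issue in the conditional-independence step. I would try the stopping-time and strong Markov argument first, conditioning sequentially on $\mathcal F_{\kappa^{(2)}_i+2c_0\sqrt{\log J}}$. The crude prefactor on the right-hand side of the statement leaves ample room for the resulting losses.
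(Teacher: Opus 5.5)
Your proposal is correct. It uses the same three ingredients as the paper:
\begin{itemize}
\item Lemma~\ref{lem:no:tildekappa}, to get $n=c_3J^{\frac{d}{d+2}}$ points $\kappa^{(2)}_i$;
\item a Girsanov/small-ball lower bound $p\approx\exp(-Cc_0\sqrt{\log J}/a^2)$ for each window;
\item a Chernoff comparison with $\mathrm{Bin}(n,p)$.
\end{itemize}
The difference is where the change of measure sits. The paper transfers only the single-window estimate to $\mP_0$ and then compares with independent Bernoulli variables under $\mP_0$. It justifies this by claiming that the increments of $\X$ after $\kappa^{(2)}_i$ are independent of the past. For a bridge that is false, since the drift $-\X_y/(J-y)$ depends on the current position, and the paper never addresses overlapping windows. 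You run the whole count under $\tilde\mP_0$, where $\X$ is a genuine Brownian motion on $[0,\alpha_*J]$ and the strong Markov property applies. You then pay for the density once, at the end, through Cauchy--Schwarz and Lemma~\ref{lem:girsanov}. This only halves the exponent and costs a constant, and it makes the Bernoulli comparison rigorous.

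A few points to tighten.

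\textbf{Dependence between windows.} Of your two devices, the second is the one that works, and it makes thinning unnecessary. Since $4\Lambda>a/16$, the event $E_i$ that window $i$ is good forces $\kappa^{(2)}_{i+1}>\kappa^{(2)}_i+2c_0\sqrt{\log J}$, so $E_i\in\mathcal F_{\kappa^{(2)}_{i+1}}$. Hence $\mathbf 1_{E_1},\dots,\mathbf 1_{E_{i-1}}$ are $\mathcal F_{\kappa^{(2)}_i}$-measurable, while $\tilde\mP_0(E_i\mid\mathcal F_{\kappa^{(2)}_i})\ge p$. Iterated conditioning then gives $\tilde\E_0\, e^{t\sum_{i\le n}\mathbf 1_{E_i}}\le(1+(e^t-1)p)^n$ for $t<0$, using all $n$ trials with no $\sqrt{\log J}$ loss. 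Set $\mathbf 1_{E_i}:=1$ when $\kappa^{(2)}_i>3\alpha_*J/4$ so the bound holds on the whole space.

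The right $\sigma$-fields are $\mathcal F_{\kappa^{(2)}_i}$, not $\mathcal F_{\kappa^{(2)}_i+2c_0\sqrt{\log J}}$: after a failed trial, $\kappa^{(2)}_{i+1}$ can fall inside the old window. Keeping every $m$-th index does not by itself separate windows. The gaps are exit times from $4\Lambda$-balls and can be short.

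\textbf{Conditioning on $G$.} Do not condition on $\{N^{(2)}_{\alpha_*J}>n\}$, since it depends on the same path. With $E=\{\#(\alpha_*J)\le K\}$, split $\mP_0(E)\le\mP_0(N^{(2)}_{\alpha_*J}\le n)+\mP_0(E\cap\{N^{(2)}_{\alpha_*J}>n\})$. Apply Cauchy--Schwarz to the second term, and use that on this intersection $\sum_{i\le n}\mathbf 1_{E_i}\le K$. This way Lemma~\ref{lem:no:tildekappa} is only needed under $\mP_0$.

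\textbf{The prefactor.} The prefactor in the stated bound is not slack produced by sparsification. It equals $\exp(C_9J^{\frac{d}{d+2}}e^{-c_2\sqrt{\log J}/a^2})$. What your argument actually proves, for $J$ large, is the cleaner bound $\exp(-C_9J^{\frac{d}{d+2}}e^{-c_5\sqrt{\log J}/a^2})$ plus the additive term $e^{-C_8J^{\frac{d}{d+2}}}$. That cleaner form is what \eqref{eq:niabo} uses.
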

\begin{proof} From Lemma \ref{lem:no:tildekappa}, $\exists c_3, C_3>0$ so 
that the number $N^{(2)}_{\alpha_*J}$ is greater than  
$c_3J^{\frac{d}{d+2}}$ with probability greater than 
$1- \exp\left(-C_3 J^{\frac{d}{d+2}}\right)$. For any 
$i\le N^{(2)}_{\alpha_*J}$, we have  
\begin{align*}
&\mP_0\left(\sup_{x\in [\kappa^{(2)}_i, \kappa^{(2)}_i+ 2c_0\sqrt{\log J}]} |\X_{x} -\X_{\kappa^{(2)}_i}|\le \frac{a}{16}\right) \\
&\hspace{3cm}\geq \inf_{x\le \frac{3\alpha_* J}{4}}\mP_0\left(\sup_{y\le 2c_0\sqrt{\log J}} |\X_{x+y} -\X_x|\le \frac{a}{16}\right),
\end{align*}
as the increments $\X_{\kappa^{(2)}_i+x} - \X_{\kappa^{(2)}_i},\, x\ge 0$ are independent of $(\X_y)_{y\le\kappa^{(2)}_i}$ by the strong Markov property. We therefore now calculate  the $\mP_0(A)$ where
\[ A=\left\{\sup_{y\le 2c_0\sqrt{\log J}} |\X_{x+y} -\X_x|\le \frac{a}{16}\right\}.\]
The Cauchy-Schwarz inequality implies that 
\begin{align*}
\tilde\mP_0(A) & \le \sqrt{\mP_0(A)}\sqrt{\E_0\left(\left(\frac{d\tilde\mP_0}{d\mP_0}\right)^2\right)} \\
& = \sqrt{\mP_0(A)} \sqrt{\E_0\left(Z_{\alpha_*J}^2\right)} \\
&= \sqrt{\mP_0(A)} \sqrt{\left[\tilde \E_0\left(Z_{\alpha_*J}^4\right) \tilde\E_0\left(\left(\frac{d\mP_0}{d\tilde\mP_0}\right)^2\right)\right]^{\frac12}}
\end{align*}
Using arguments similar to those in the  proof of Lemma \ref{lem:girsanov} we have that 
\[\tilde \E_0\left(Z_{\alpha_*J}^4\right) \tilde\E_0\left(\left(\frac{d\mP_0}{d\tilde\mP_0}\right)^2\right) \le \tilde \E_0\left(Z_{\alpha_*J}^4\right) \exp\left(\frac{2\alpha d }{(1-\alpha)^2}\right)  \leq \exp\left(C\right) \]
for some constant $C\equiv C(\alpha_*, d)$.  By partitioning $[0,2c_0\sqrt{\log J}]$ into intervals of length $a^2$ and applying the Markov property, we obtain
\[ \tilde\mP_0(A) \ge \exp\left(-C \frac{c_0\sqrt{\log J}}{a^2}\right),\]
for some universal constant $C$. Therefore we obtain 
\[\mP_0(A) \ge \exp\left(- C \frac{c_0\sqrt{\log J}}{a^2}\right).\]

 Let $Y_1, Y_2,\ldots, Y_{c_3J^{\frac{d}{d+2}}}$ be independent Bernoulli random variables with probability of success $p= \exp\left(-C_3 \frac{c_0\sqrt{\log J}}{a^2}\right)$. Denote by $\mathcal N$ the number of successes.  We have for $t<0$ and $K>0$
 \begin{align*}
 P(\mathcal N <K) &\le  e^{-tK} \left[E e^{tY_1}\right]^{c_3J^{\frac{d}{d+2}}} \\
 &\le e^{-tK} \exp\left(c_3J^{\frac{d}{d+2}} \log \left[1+(e^t-1) \exp\left(-C_3 \frac{ c_0\sqrt{\log J}}{a^2}\right)\right]\right) 
 \end{align*}
If we choose $t<0$ close to $0$, and 
$K= c J^{\frac{d}{d+2}}\exp\left(-C_3 \frac{ c_0\sqrt{\log J}}{a^2}\right)$ 
for small enough $c$ we obtain the result of Lemma \ref{lem:barkappa}.  
\end{proof}

\subsection{Bounds on the Volume of a Sausage} \label{sec:bvs}
Let $ \kappa_i^{(3)}$ and $N^{(3)}_{\alpha_*J}$ be as in \eqref{eq:kappa}.
\begin{align}
  \label{eq:sp1}  \lambda_i & =  \kappa^{(3)}_i + c_0\sqrt{\log J}, \qquad \text{ for } i=1,2,\cdots,  N^{(3)}_{\alpha_*J}-1 , \nonumber\\
&\mbox{and} \nonumber \\
I_i &= \left[\lambda_i -c_0\sqrt{\log J}, \lambda_i+ c_0\sqrt{\log J}\right].
\end{align}

Consider the events 
\be \label{eq:Aki:b} 
\begin{split}
{A}_i & = \left\{|\X_x- \X_{\kappa^{(2)}_i}|\le \frac{a}{16} \text{ for all } x\in [\kappa^{(2)}_i, \kappa^{(2)}_i+2c_0\sqrt{\log J}] \right\}, \\
B &= \left\{\left |\X_x\right | \le J \text{ for all }  x\in [0,J]\right\}
\end{split}
\ee

\begin{lem}\label{lem:dconv:cm}  On the event $ A_i \cap B$ we have for $0 \leq t \leq 1$
  \be  \label{eq:dconv:cm} \left|(G_t*\X)(\lambda_i) - \X_{\kappa^{(3)}_i}\right| \le \frac{a}{16}+2J \exp\left(-\frac{c_0^2(\log J)}{8t}\right).
  \ee
\end{lem}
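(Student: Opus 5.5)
The plan is to write the heat-kernel smoothing at $\lambda_i$ as an average of $\X$ against a probability density, and split the average into a local window and its complement. Since $G_t(\cdot)$ is the periodic heat kernel on $[0,J]$, it is nonnegative and integrates to $1$. Hence
\[
(G_t*\X)(\lambda_i)-\X_{\kappa^{(3)}_i}=\int_0^J G_t(\lambda_i-y)\bigl(\X_y-\X_{\kappa^{(3)}_i}\bigr)\,dy .
\]
We split the $y$-integral into $y\in I_i=[\lambda_i-c_0\sqrt{\log J},\lambda_i+c_0\sqrt{\log J}]=[\kappa^{(3)}_i,\kappa^{(3)}_i+2c_0\sqrt{\log J}]$ and $y\in[0,J]\setminus I_i$. (We read $A_i$ with $\kappa^{(3)}_i$ in place of $\kappa^{(2)}_i$, which is consistent because the $\kappa^{(3)}$ are a subsequence of the $\kappa^{(2)}$.)

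On $I_i$, the event $A_i$ gives $|\X_y-\X_{\kappa^{(3)}_i}|\le a/16$. Since $\int_{I_i}G_t(\lambda_i-y)\,dy\le 1$, this part contributes at most $a/16$.

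Off $I_i$, the event $B$ gives $|\X_y-\X_{\kappa^{(3)}_i}|\le 2J$. So this part is at most $2J\int_{[0,J]\setminus I_i}G_t(\lambda_i-y)\,dy$. By periodicity, $G_t(\lambda_i-\cdot)$ on the circle is the law of $\lambda_i+W_t$ mod $J$, where $W_t\sim N(0,t)$. If the image of $\lambda_i+W_t$ mod $J$ falls outside $I_i$, then $|W_t|\ge c_0\sqrt{\log J}$, because $I_i$ is an arc of half-length $c_0\sqrt{\log J}$ centred at $\lambda_i$. We may assume $2c_0\sqrt{\log J}<J$; otherwise $I_i$ is the whole circle and the second term vanishes. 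Therefore
\[
\int_{[0,J]\setminus I_i}G_t(\lambda_i-y)\,dy\le P\bigl(|W_t|\ge c_0\sqrt{\log J}\bigr)\le 2\exp\Bigl(-\frac{c_0^2\log J}{2t}\Bigr).
\]

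The main point needing care is matching the stated constant. The Gaussian tail gives $2e^{-c_0^2\log J/(2t)}$, which is at most $e^{-c_0^2\log J/(8t)}$ once $\frac{3c_0^2\log J}{8t}\ge\log 2$. This holds for $t\le 1$ as soon as $J$ is large enough relative to $c_0$. Alternatively, the bound follows directly from the sharper estimate $P(|W_t|\ge r)\le e^{-r^2/(2t)}$, valid for $r\ge\sqrt t$. The leftover factor of $2$ from $|\X_y-\X_{\kappa^{(3)}_i}|\le 2J$ gives the displayed $2J\exp(-c_0^2\log J/(8t))$. The generous $1/8$ in the exponent absorbs the constant. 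For $t=0$ the claim is trivial, since then $(G_0*\X)(\lambda_i)=\X_{\lambda_i}$ and $A_i$ applies directly.

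Adding the two contributions gives \eqref{eq:dconv:cm}.
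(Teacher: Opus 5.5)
Your proof is correct, and its skeleton matches the paper's. Write the difference as $\int_0^J G_t(\lambda_i-y)\bigl(\X_y-\X_{\kappa^{(3)}_i}\bigr)\,dy$. Use $A_i$ on $I_i$ to get $a/16$, and use $B$ off $I_i$ to get $2J$ times the kernel mass outside $I_i$.

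You differ from the paper in how that outside mass is controlled. The paper quotes a pointwise comparison $G_t(\lambda_i-y)\le C\,p(t,\lambda_i-y)$, valid for $\lambda_i$ in the middle of $[0,J]$ and $t\le 1$. It then applies a Gaussian tail and lets the $1/8$ in the exponent absorb $C$. You instead use the exact fact that $G_t(\lambda_i-\cdot)$ is the density of $\lambda_i+W_t$ wrapped onto $\R/J\Z$. Hence the mass off the arc $I_i$ is at most $P(|W_t|\ge c_0\sqrt{\log J})$.

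Your route has two advantages:
\begin{itemize}
\item The step is self-contained with no constant to absorb. In fact $P(|Z|\ge r)\le e^{-r^2/2}$ holds for every $r\ge 0$, so you even get exponent $c_0^2\log J/(2t)$ with no largeness condition on $J$.
\item It handles the periodic images automatically. Read literally for all $y\in[0,J]$, the paper's pointwise bound by $p(t,\lambda_i-y)$ fails for $y$ near $J$, where the $n=1$ image term $p(t,\lambda_i-y+J)$ can dominate. The bound is correct once $\lambda_i-y$ is replaced by its representative in $(-J/2,J/2]$. That extra contribution is exponentially small in $J^2/t$ anyway, so the paper's conclusion stands.
\end{itemize}

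The paper needs its kernel comparison elsewhere, for the noise terms. For the initial-data term, however, only the total mass outside the window matters, and your wrapped-Brownian-motion picture gives exactly that.

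Your side assumptions are the same ones the paper makes implicitly: reading $A_i$ at $\kappa^{(3)}_i$, and $I_i$ being a genuine arc. The latter holds since $\kappa^{(3)}_i\ge\alpha_*J/4$ and $2c_0\sqrt{\log J}$ is small compared with $J$.
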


\begin{proof}On the event ${A}_K(i) \cap B$
\begin{align*}
&\left|(G_t*\X)(\lambda_i) - \X_{\kappa^{(3)}_i}\right| \\
&\le \int_0^{J}G_t(\lambda_i-y) |\X_y-\X_{\kappa^{(3)}_i}| \, dy \\
& = \int_{I_i}G_t(\lambda_i-y) |\X_y-\X_{\kappa^{(3)}_i}| \, dy +\int_{I_i^c}G_t(\lambda_i-y) |\X_y-\X_{\kappa^{(3)}_i}| \, dy 
\end{align*}
It is known (see for instance proof of \cite[Lemma 3.2]{athr-jose-muel}) that there exists a  $C>0$ such that for $x \in \left[\frac{\alpha_*J}{4},\frac{3\alpha_*J}{4}\right]$ and $t\le 1$
\[ G_t(x-y) \le C p(t,x-y) \text{ for all } y \in [0,J],\]
where $p(\cdot,\cdot)$ is the heat kernel on $\R$ given by \eqref{eq:hkr} and $G$ is extended periodically to $\R$.

Therefore on the event ${A}_K(i) \cap B$
\begin{align*}
&\left|(G_t*\X)(\lambda_i) - \X_{\kappa^{(3)}_i}\right| 
 \le \frac{a}{16} + 2J \exp\left(-\frac{c_0^2\log J}{8t}\right)
\end{align*}
The proof is complete.
\end{proof}

Let 
\[ \Delta = c_0\sqrt{\log J}.\]
For each $i=1,2,\cdots,  N^{(3)}_{\alpha_*J}-1$, we look at the process 
\begin{equation} \label{eq:v}  \vv_i^{(\Delta)}(t):= \int_0^t\int_{\lambda_i- \Delta}^{\lambda_i+\Delta} G_{t-s}(\lambda_i-y) \W(ds dy), \qquad 0\le t\le 1,\end{equation}
around each of the points $\lambda_i$.
\begin{lem} The processes $\{\vv_i^{(\Delta)}(\cdot)\}_{i=1,2,\cdots,  N^{(3)}_{\alpha_*J}-1}$ are independent. 
\end{lem}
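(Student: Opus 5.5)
The plan is to exploit the white noise property that stochastic integrals over disjoint space--time regions are independent, conditionally on the $\lambda_i$. The first step is to record that the points $\lambda_i$, and hence the windows $I_i=[\lambda_i-\Delta,\lambda_i+\Delta]$, are determined by the initial profile $\X=\uv_0$ alone. The random variables $\kappa^{(2)}_i$, $\kappa^{(3)}_i$, $N^{(3)}_{\alpha_*J}$ and $\lambda_i$ are therefore measurable with respect to the $\sigma$-field of the Brownian bridge, which is independent of the white noise $\W$. So the statement is understood conditionally on $\X$: we fix a realization of the $\lambda_i$ and treat them as deterministic points. Independence given $\X$ is what is needed later, when we bound the sausage volume at the separate points $\lambda_i$.

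The second step is disjointness of the windows. By construction in \eqref{eq:kappa}, the retained points $\kappa^{(3)}_{i}$ form a subsequence with the property that $\X$ stays within $a/16$ of $\X_{\kappa^{(3)}_i}$ on $[\kappa^{(3)}_i,\kappa^{(3)}_i+2\Delta]$. Consecutive $\kappa^{(2)}$ points satisfy $|\X_{\kappa^{(2)}_{i+1}}-\X_{\kappa^{(2)}_i}|\ge 4\Lambda > a/16$ (since $\Lambda\ge1\ge a$), so $\kappa^{(2)}_{i+1}\notin[\kappa^{(2)}_i,\kappa^{(2)}_i+2\Delta]$ whenever the $i$th point is retained. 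Hence the intervals $I_i=[\kappa^{(3)}_i,\kappa^{(3)}_i+2\Delta]$ have pairwise disjoint interiors. If overlapping endpoints are a concern, we thin to every other index, or shrink $\Delta$ slightly. Also $I_i\subset[0,J]$ because $\kappa^{(3)}_i\le 3\alpha_*J/4$ and $2\Delta\ll J$ for large $J$, so no wrap-around on the circle occurs.

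The third step uses the white noise. Conditionally on the $\lambda_i$, each process $\vv_i^{(\Delta)}(\cdot)$ is a centered Gaussian process (componentwise, with i.i.d.\ components) given by a Walsh integral of the deterministic kernel $G_{t-s}(\lambda_i-y)\mathbf 1_{I_i}(y)\mathbf 1_{s\le t}$. For $i\ne j$ the covariance is
\[
\E\big[\vv_i^{(\Delta)}(t)\cdot\vv_j^{(\Delta)}(t')\big]= d\int_0^{t\wedge t'}\!\!\int_{I_i\cap I_j} G_{t-s}(\lambda_i-y)G_{t'-s}(\lambda_j-y)\,dy\,ds=0,
\]
since $I_i\cap I_j$ has Lebesgue measure zero. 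The family $\{\vv_i^{(\Delta)}(t)\}_{i,t}$ is jointly Gaussian, and the cross-covariances vanish, so the processes are independent given $\X$. The same conclusion follows directly from the fact that $\W$ restricted to the disjoint sets $[0,1]\times I_i$ gives independent noises.

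The main obstacle is the conditioning. The $\lambda_i$ are random, and their number $N^{(3)}_{\alpha_*J}$ is random, so the claim should be made precise as conditional independence given $\sigma(\X)$. This is legitimate because $\W$ is independent of $\uv_0$: we can compute with a regular conditional distribution, treating the $\lambda_i$ as constants plugged into deterministic integrands. The disjointness in the second step has to be checked carefully from the definition of the subsequence.
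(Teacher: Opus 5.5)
Your proposal is correct and follows the same route as the paper: the $\lambda_i$ are $\sigma(\X)$-measurable and hence independent of $\W$, and conditionally on $\X$ the processes $\vv_i^{(\Delta)}$ are Wiener integrals over the disjoint space--time regions $[0,1]\times I_i$, so they are independent. Your explicit conditional formulation, and your check that the windows $I_i=[\kappa^{(3)}_i,\kappa^{(3)}_i+2\Delta]$ are disjoint, fill in details that the paper's one-line proof leaves implicit; the disjointness holds because $|\X_{\kappa^{(2)}_{i+1}}-\X_{\kappa^{(2)}_i}|\ge 4\Lambda> a/16$ forces $\kappa^{(2)}_{i+1}>\kappa^{(2)}_i+2\Delta$ whenever index $i$ is retained.
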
 
\begin{proof} The lemma follows easily since the processes 
$\{\vv_i^{(\Delta)}(\cdot)\}_{i=1,2,\cdots,  N^{(3)}_{\alpha_*J}-1}$ 
depend on disjoint regions of space-time, and the white noises on these 
regions are independent. Note also that the points $\lambda_i$ are 
dependent only on the initial profile, and are therefore independent of 
the white noise. \end{proof}

%XXXXXXXXXXXXXXXXXXXXXXXXXXXXXXXXXXXXXXXXXXXXXXXXXXXXXXXXXXX

\begin{lem} \label{lem:secterm}
 Let $\lambda_i \in [0,J]$  be as in \eqref{eq:sp1}.  
Then $\exists C >0$ such that for $y \in [0,J]$ 
and $0\leq s < t\leq 1$ we have 
\begin{align*}
\int_{0}^{s}\int_{[\lambda_i-\Delta,\lambda_i+\Delta]^c\cap[0,J]}
  &\Big[ G_{t-r}(\lambda_i-y)-G_{s-r}(\lambda_i-y) \Big]^2dydr  \\
&\leq C\sqrt{t-s}\cdot\left(\big|\ln(t-s)\big|+1\right)
      \exp\left(-\frac{\Delta^2}{t}\right).  
\end{align*}
\end{lem}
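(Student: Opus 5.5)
First I reduce everything to the whole-line heat kernel $p$. Since $\lambda_i$ lies in $[\alpha_*J/4, 3\alpha_*J/4]$, it stays at distance of order $J$ from the endpoints $0$ and $J$. For $y\in[0,J]$ I rewrite $G_{t-r}(\lambda_i-y)-G_{s-r}(\lambda_i-y)$ as $\sum_{n\in\Z}\big[p(t-r,\lambda_i-y+nJ)-p(s-r,\lambda_i-y+nJ)\big]$. Substituting $z=\lambda_i-y$, the domain $\{y\in[0,J]:|\lambda_i-y|\ge\Delta\}$ becomes a subset of $\{|z|\ge \Delta\}$. The shifted points $z+nJ$ with $n\neq 0$ then range over a subset of $\{|w|\ge \Delta\}$, because $\Delta=c_0\sqrt{\log J}\ll J$ and $\lambda_i$ is in the bulk. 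The sum over $n$ is disjoint and covers each $w$ at most once. By Cauchy--Schwarz with weights, or simply the elementary bound $\big(\sum_n a_n\big)^2\le C\sum_n a_n^2$ where the $n\ne0$ terms are super-exponentially small, I bound the integral by
\[
C\int_0^s\int_{|w|\ge\Delta}\big[p(t-r,w)-p(s-r,w)\big]^2\,dw\,dr
\]
plus a negligible term of order $\exp(-cJ^2)$. This is the same comparison $G\le Cp$ already used in the proof of Lemma~\ref{lem:dconv:cm}.

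Next I set $h=t-s$ and $u=s-r\in[0,s]$, so I must estimate $\int_0^s\int_{|w|\ge\Delta}[p(u+h,w)-p(u,w)]^2\,dw\,du$. I split the $u$-range into two pieces.

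For $u\le h$, I use $[p(u+h,w)-p(u,w)]^2\le 2p(u+h,w)^2+2p(u,w)^2$. Together with $p(v,w)^2=(4\pi v)^{-1/2}p(v/2,w)$, this gives $\int_{|w|\ge\Delta}p(v,w)^2\,dw\le Cv^{-1/2}e^{-\Delta^2/v}$. Since $v\le u+h\le t$, the exponential factor is at most $e^{-\Delta^2/t}$, and $\int_0^{h}v^{-1/2}\,dv\le C\sqrt h$.

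For $h<u\le s$, I use the mean value theorem in time: $|p(u+h,w)-p(u,w)|\le h\sup_{v\in[u,u+h]}|\partial_v p(v,w)|$. Here $|\partial_vp(v,w)|\le Cv^{-1}p(2v,w)$, using $|w^2/v-1|e^{-w^2/(4v)}\le C$. Squaring and integrating over $|w|\ge\Delta$ gives at most $Ch^2u^{-5/2}e^{-\Delta^2/(C' t)}$. I then integrate $h^2u^{-5/2}$ over $u>h$, which gives $Ch^{1/2}$.

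That last bound only produces a weaker rate $\Delta^2/(C't)$ in the exponent. To recover exactly $\exp(-\Delta^2/t)$, I redo the Gaussian-tail bound carefully. On $|w|\ge\Delta$, $p(v,w)^2\propto v^{-1}e^{-w^2/v}$ and $v\le t$, so $e^{-w^2/v}\le e^{-\Delta^2/t}e^{-(w^2-\Delta^2)/v}$. I keep the full factor $e^{-w^2/v}$ and bound the polynomial $w^2/v$ prefactors using the leftover $e^{-(w^2-\Delta^2)/v}$ plus a factor $(1+\Delta^2/v)$. That factor can be absorbed into the $\ln$ allowance, or into the constant when $\Delta^2\lesssim t$. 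Failing this, I would accept a slightly worse constant in the exponent, since only the form $\exp(-c\Delta^2/t)$ matters later. Summing the dyadic pieces $u\in[2^kh,2^{k+1}h]$ with the uniform exponential factor and a crude bound $h/u\wedge1$ yields at most $C\sqrt h(|\ln h|+1)$.

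The main obstacle is this bookkeeping: preserving the sharp exponent $\Delta^2/t$ while the time-derivative bounds introduce $w^2/v^2$ factors. I would handle it with the $\ln$ term coming from the dyadic sum over $u\in[h,1]$, where each piece contributes $\lesssim\sqrt h$ up to the Gaussian factor.
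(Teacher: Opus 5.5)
You have not proved the displayed inequality: the step meant to reach the exponent $e^{-\Delta^2/t}$ fails. Your treatment of $u=s-r\le t-s$ and your mean-value estimate for $u>t-s$ are sound, but together they only give $C\sqrt{t-s}\,e^{-\Delta^2/(2t)}$.

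\textbf{Why the upgrade fails.} Once you keep $e^{-w^2/v}$ intact, you are left with a factor of order $(1+\Delta^2/v)^2$. This cannot be absorbed into $|\ln(t-s)|+1$: the logarithm does not depend on $\Delta$, whereas $\Delta^2/v\ge\Delta^2/t$ is unbounded. Here $\Delta=c_0\sqrt{\log J}$ with $c_0$ chosen large, and $t$ may be small.

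The dyadic sum with ``a crude bound $h/u\wedge1$ and a uniform exponential factor'' has the same defect. It presupposes $|p(u+h,w)-p(u,w)|\lesssim (h/u)$ times a Gaussian with unchanged exponent. But $\partial_v\log p(v,w)=\frac{1}{2v}\bigl(\frac{w^2}{v}-1\bigr)$, so on $|w|\ge\Delta$ the relative increment is of order $\frac{h}{u}\cdot\frac{w^2}{u}$, not $\frac hu$.

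Falling back to $e^{-c\Delta^2/t}$ proves a weaker inequality than the one stated. For comparison, the paper's route also only yields $e^{-\Delta^2/(2t)}$ after squaring. It uses Minkowski over the periodic images, a pointwise bound of the time increment by $e^{-\Delta^2/(4t)}$ times an increment of $p(2\,\cdot,\cdot)$, and the standard $\sqrt{t-s}$ estimate. That weaker exponent is what is actually invoked in Lemma~\ref{lem:N-v}. So your rigorous part matches what is used downstream, but not the statement.

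\textbf{The missing idea.} Pay for the polynomial factors with the concentration of $e^{-\Delta^2/(u+h)}$ near $u=s$ together with an interpolation, not with the logarithm. Take $\Delta^2>t$; otherwise the plain whole-line bound $C\sqrt h$ already suffices. Set $h=t-s$, $u\in[h,s]$ and $V=u+h\in[2h,t]$.

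Pointwise you have two bounds on $[p(u+h,w)-p(u,w)]^2$:
\begin{itemize}
\item $Ch^2V^{-3}(1+w^2/V)^2e^{-w^2/V}$, from the mean value theorem, $u\ge V/2$, and $e^{-w^2/(2v)}\le e^{-w^2/(2V)}$ for $v\le V$;
\item $2p(V,w)^2+2p(u,w)^2$.
\end{itemize}
Integrating over $|w|\ge\Delta$, the $u$-integrand is at most $C\min\{h^2\Delta^4V^{-9/2},V^{-1/2}\}e^{-\Delta^2/V}$.

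Now substitute $y=\Delta^2/V$ and use $\int_Y^\infty y^ke^{-y}\,dy\le C_kY^ke^{-Y}$ for $Y=\Delta^2/t\ge1$. With $\min(A,B)\le A^{1/4}B^{3/4}$ this gives
\[
\int_h^s\int_{|w|\ge\Delta}\bigl[p(u+h,w)-p(u,w)\bigr]^2\,dw\,du\le Ce^{-\Delta^2/t}\min\Bigl\{\frac{h^2\Delta^2}{t^{5/2}},\frac{t^{3/2}}{\Delta^2}\Bigr\}\le C\sqrt{h}\,\frac{\sqrt t}{\Delta}\,e^{-\Delta^2/t}.
\]
The last expression is at most $C\sqrt h\,e^{-\Delta^2/t}$. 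Combined with your $u\le h$ piece, this gives the lemma for $p$, even without the logarithm.

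\textbf{The periodic images.} For $G$, do not dismiss the images as an $O(e^{-cJ^2})$ error. Such a term carries neither the factor $\sqrt{t-s}$ nor $e^{-\Delta^2/t}$ for small $t$. Instead, apply the same bound to the $n$-th image, which lies at distance $\ge\alpha_*J/4+(|n|-1)J\ge\Delta$, and sum with Minkowski as the paper does. Your unweighted $(\sum_n a_n)^2\le C\sum_n a_n^2$ is false as stated.
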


\begin{proof} 
Using \eqref{eq:hk}, we have for $r \in [s,t]$
\begin{align*}
G_{t-r}&(\lambda_i-y)-G_{s-r}(\lambda_i-y)  \\
& = \sum_{n\in \Z}\Big[p(t-r,{\lambda}_i-y+nJ) - p(s-r,{\lambda}_i-y+nJ) \Big].
\end{align*}
By Minkowski's inequality, Fubini's theorem, and
the change of variables $z=y-\lambda_i$, $\eta=s-r$, we obtain
\begin{equation} \label{eq:sum_n}
\begin{split}
&\left[\int_{0}^{s}\int_{[\lambda_i-\Delta,\lambda_i+\Delta]^c\cap[0,J]}
  \Big[ G_{t-r}(\lambda_i-y)-G_{s-r}(\lambda_i-y) \Big]^2dydr\right]^{1/2} \\
&\le \sum_{n\in\mathbb{Z}} \left[\int_{0}^{s}
    \int_{[-\Delta,\Delta]^c\cap[-\lambda_i,J-\lambda_i]}
    \Big[p(t-s +\eta,-z+nJ)-p(\eta,-z+nJ)\Big]^2dzd\eta\right]^{1/2}  \\
&\le \sum_{n\in\mathbb{Z}} \left[\int_{0}^{s}
    \int_{[-\Delta,\Delta]^c\cap\left[-\frac{3J}{4},\frac{3J}{4}\right]}
    \Big[p(t-s +\eta,-z+nJ)-p(\eta,-z+nJ)\Big]^2dzd\eta\right]^{1/2} 
\end{split}
\end{equation}
as $\lambda_i\in[\frac{\alpha_*J}{4},\frac{3\alpha_*J}{4}]$ from
\eqref{eq:sp1}.

Fix $n \in \Z$ and let
\begin{equation} \label{eq:term_n}
I_n :=  \int_{0}^{s}
    \int_{[-\Delta,\Delta]^c\cap\left[-\frac{3J}{4},\frac{3J}{4}\right]}
    \Big[p(t-s +\eta,-z+nJ)-p(\eta,-z+nJ)\Big]^2dzd\eta
\end{equation}
We will estimate $I_n$. For convenience, 
set $L=-z+nJ$ and $\chi=t-s$. Now, 
\begin{align*}
p(\chi +\eta,L)-p(\eta,L)
&= \int_{\eta}^{\eta+\chi }\frac{\partial}{\partial v}p(v,L)dv  \\
&= C\int_{\eta}^{\eta+\chi }v^{-5/2}[L^2-v]
           \exp\left(-\frac{L^2}{2v}\right)dv.  
\end{align*}
Since $z\in [-\Delta,\Delta]^c\cap\left[-\frac{3J}{4},\frac{3J}{4}\right]$ 
and $\Delta=c_0\log J$, we can choose $J_0$ large enough so that 
$J\geq J_0$ implies $\Delta<J/2$.  It follows that $J \geq J_0$ we have  
$L^2\ge\Delta^2$ and for $v\in[0,t]$ we 
have
\begin{align*}
L^2-v \le 2\left[\frac{L^2}{2}-v\right],
&&  \exp\left(-\frac{L^2}{2v}\right) \le 
   \exp\left(-\frac{\Delta^2}{4t}\right)
   \cdot\exp\left(-\frac{L^2}{4v}\right).  
\end{align*}
We conclude that for $n \in \Z$ and $J \geq J_0$
\begin{equation} \label{eq:diff_p_s}
\begin{split}
0 < p(t-s &+\eta,-z+nJ)-p(\eta,-z+nJ)  = p(\chi +\eta,L)-p(\eta,L)\\ &\le 
   C\exp\left(-\frac{\Delta^2}{4t}\right)
     \int_{\eta}^{\eta+\chi }v^{-5/2}\left[\frac{L^2}{2}-v\right]
           \exp\left(-\frac{L^2}{4v}\right)dv   \\
&= C\exp\left(-\frac{\Delta^2}{4t}\right)
     \Big[p(\chi +\eta,L/\sqrt{2})-p(\sigma,L/\sqrt{2})\Big]  \\
&= C\exp\left(-\frac{\Delta^2}{4t}\right)
     \Big[p(2(t-r),-z+nJ)-p(2(s-r),-z+nJ)\Big]
\end{split}
\end{equation}
%Substituting \eqref{eq:diff_p_s} in   \eqref{eq:sum_n} 
%For convenience we write 
%\begin{equation*}
%\Xi := \exp\left(-\frac{\Delta^2}{2t}\right)
%\end{equation*}
Let $N$ be a positive integer such that
\begin{equation} \label{eq:Nchoice}
N = \beta \big(|\ln(t-s)|+1\big),
\end{equation}
for some $\beta>0$ to be chosen later.

\noindent
\textbf{Case 1: $|n|\le N$}
Using \eqref{eq:diff_p_s} in \eqref{eq:term_n} we have
\begin{align} \label{eq:termlN}
  & I_n =\int_{0}^{s}
    \int_{[-\Delta,\Delta]^c\cap\left[-\frac{3J}{4},\frac{3J}{4}\right]}
    \Big[p(t-s +\eta,-z+nJ)-p(\eta,-z+nJ)\Big]^2dzd\eta \nonumber\\
  &\le C\,\exp\left(-\frac{\Delta^2}{2t}\right)\, \int_0^s\int_{\mathbb{R}}
\Big[ p(2(t-r),{\lambda}_i-y+nJ) 
  - p(2(s-r),{\lambda}_i-y+nJ)\Big]^2dydr \nonumber\\
&\le C\,\exp\left(-\frac{\Delta^2}{2t}\right)\,\sqrt{t-s}. \nonumber \\  
\end{align}
for some $C>0$.

\textbf{Case 2: $|n|>N$}
Using \eqref{eq:diff_p_s} we have
\begin{align} \label{eq:termgN1} 
I_n & \leq C \exp\left(-\frac{\Delta^2}{2t}\right)\int_0^1 \int_{[-J,J]}  p(2r,-y+nJ)^2 dydr  \nonumber\\
&\le CJ\,\exp\left(-\frac{\Delta^2}{2t}\right)\,\int_0^1 p(2r,(n-1)J)^2 dydr  \nonumber \\
&= CJ\, \exp\left(-\frac{\Delta^2}{2t}\right)\,\int_0^1 
     \frac{1}{r}\exp\left(-\frac{[(n-1)J]^2}{2r}\right)dr  \nonumber\\
&\le CJ\,\exp\left(-\frac{\Delta^2}{2t}\right)\,\sup_{r\in(0,1)}
     \left[\frac{1}{r}\exp\left(-\frac{[(n-1)J]^2}{2r}\right)\right].
\end{align}

The above supremum is equal to 
\begin{equation*}
\sup_{s>1}\left[se^{-sL}\right],  
\end{equation*}
where $s=\frac{1}{r}$ and $L=[(n-1)J]^2.$

Let $f: (1,\infty) \rightarrow (0,\infty)$ be given by $f(s)=se^{-sL}$.  Setting $f'(s)=0$, we find that 
\begin{equation*}
0 = e^{-sL} - sLe^{-sL} = e^{-sL}[1-sL].  
\end{equation*}
However, for $J>1$ and $|n|\ge1$, we find that $1/L<1$. Thus the above supremum is attained at $s=1/r=1$. Substituting this in \eqref{eq:termgN1} we obtain that 

\begin{align} \label{eq:termgN}
I_n&\le CJ\,\exp\left(-\frac{\Delta^2}{2t}\right)\,\exp\left[-\frac{(n-1)^2J^2}{2}\right]  
\end{align}

Using the bounds for $I_n$ from \eqref{eq:termgN} and \eqref{eq:termlN},
we have
\begin{align} \label{eq:sumIn}
  \sum_{n\in \Z}  I_n^{1/2} 
& =   \sum_{\mid n \mid < N} I_n^{1/2}  
      +   \sum_{\mid n \mid \geq N}  I_n^{1/2}  \nonumber \\
  & \le  C\,\exp\left(-\frac{\Delta^2}{4t}\right)
    \left [ \,N\sqrt{t-s}  
       + J\sum_{\mid n \mid \geq N}\,
         \exp\left[-\frac{(n-1)^2J^2}{4}\right] \right]
\end{align}
As $t-s\le 1$, using our choice of $N$ as in \eqref{eq:Nchoice}, 
substituting \eqref{eq:sumIn} in \eqref{eq:sum_n}, and choosing $\beta >0$ 
large enough (in the definition of $N$, \eqref{eq:Nchoice}) finishes the proof of 
Lemma \ref{lem:secterm}.  
\end{proof}

%XXXXXXXXXXXXXXXXXXXXXXXXXXXXXXXXXXXXXXXXXXXXXXXXXXXXXXXXXXXXXXXX

\begin{lem} \label{lem:N-v} There exist constants $c,C>0$ such that for $r>0$
\bes \mP\left(\sup_{t\le 1} \Big\vert \N(t,{\lambda}_i) - \vv_i^{(\Delta)}(t)\Big\vert >r\right) \\
 \le  c\cdot\exp\left(-C r^2 \exp\left(\frac{\Delta^2}{2} \right) \right)
\ees
\end{lem}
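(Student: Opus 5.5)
The difference $\mathbf D_i(t):=\N(t,\lambda_i)-\vv_i^{(\Delta)}(t)$ is a Gaussian process. I will prove the bound by the standard Gaussian route: control the increments of $\mathbf D_i$ in $L^2$ and then apply a chaining bound. Since the points $\lambda_i$ depend only on the initial profile, which is independent of the white noise, I condition on $\X$ and treat $\lambda_i$ as a fixed point of $[\frac{\alpha_*J}{4},\frac{3\alpha_*J}{4}]$. Coordinates are i.i.d., so it is enough to work with one coordinate $D(t)$. Explicitly,
\[ D(t)=\int_0^t\int_{[\lambda_i-\Delta,\lambda_i+\Delta]^c\cap[0,J]} G_{t-s}(\lambda_i-y)\,W(ds\,dy), \]
and $D(0)=0$.

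\textbf{Step 1: variance and increment bounds.} For $0\le s<t\le 1$, split $D(t)-D(s)$ into two pieces. The first is the integral over $[0,s]$ of $G_{t-r}-G_{s-r}$ restricted to the complement region; Lemma \ref{lem:secterm} gives its second moment as at most $C\sqrt{t-s}\,(|\ln(t-s)|+1)\exp(-\Delta^2/t)$. The second is the integral over $[s,t]$ of $G_{t-r}(\lambda_i-y)^2$ over $|y-\lambda_i|>\Delta$. Using $G_t\le Cp(t,\cdot)$ at such $\lambda_i$ (as quoted in the proof of Lemma \ref{lem:dconv:cm}) and the Gaussian tail $p(u,z)^2\le Cu^{-1/2}p(u/2,z)e^{-z^2/(2u)}$, this is at most $C\sqrt{t-s}\,e^{-\Delta^2/(2(t-s))}\le C\sqrt{t-s}\,e^{-\Delta^2/2}$. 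Combining the two pieces, for some fixed $\gamma\in(0,\frac12)$,
\[ \E|D(t)-D(s)|^2\le C\,|t-s|^{\gamma}\,e^{-\Delta^2/2}. \]
The same argument gives $\sup_{t\le1}\E D(t)^2\le Ce^{-\Delta^2/2}$. The exponent $e^{-\Delta^2/t}\le e^{-\Delta^2}$ in Lemma \ref{lem:secterm} is better than needed, so the constant $\frac12$ comes from the second piece.

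\textbf{Step 2: chaining and concentration.} Set $\sigma^2:=e^{-\Delta^2/2}$ and $\tilde D:=D/\sigma$. By Step 1, $\tilde D$ is a centered Gaussian process on $[0,1]$ with $\E|\tilde D(t)-\tilde D(s)|^2\le C|t-s|^\gamma$ and bounded variance. Dudley's entropy bound, or equivalently Garsia--Rodemich--Rumsey / Kolmogorov, then gives $\E\sup_{t\le1}|\tilde D(t)|\le C_*$ with $C_*$ independent of $J$ and $\Delta$. The Borell--TIS inequality gives, for $r'>2C_*$,
\[ \mP\Big(\sup_{t\le 1}|\tilde D(t)|>r'\Big)\le \exp\big(-c(r'-C_*)^2\big)\le \exp(-c' r'^2). \]
For $r'\le 2C_*$ the bound $c\,e^{-Cr'^2}$ holds trivially once $c$ is chosen large. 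Substituting $r'=r/\sigma$ yields $c\exp(-Cr^2e^{\Delta^2/2})$. A union bound over the $d$ coordinates only changes the constants.

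\textbf{Main obstacle.} The delicate point is Step 1: the increment bound must be uniform in $J$, and the factor $e^{-\Delta^2/2}$ must appear in both pieces. In particular, the near-diagonal piece over $[s,t]$ must not lose the exponential factor when $t-s$ is small. I would handle the periodic images $n\neq0$ in $G$ exactly as in Lemma \ref{lem:secterm}, splitting into $|n|\le N$ and $|n|>N$. I would also absorb the logarithmic factor from Lemma \ref{lem:secterm} into $|t-s|^\gamma$ by taking $\gamma<\frac12$.
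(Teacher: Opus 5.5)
Your proposal is correct and is essentially the paper's argument. You split the increment into the $[0,s]$ piece, handled by Lemma \ref{lem:secterm}, and the $[s,t]$ piece, bounded directly, so both second moments are of order $|t-s|^{\gamma}e^{-\Delta^2/2}$; you then chain, making explicit (via normalization by $e^{-\Delta^2/4}$, Dudley, and Borell--TIS) the step the paper simply cites from an earlier chaining lemma. One harmless slip: $p(u,z)^2\le Cu^{-1/2}p(u/2,z)e^{-z^2/(2u)}$ is false for $z^2\gg u$, but the identity $p(u,z)^2=(2\pi u)^{-1/2}p(u,z)e^{-z^2/(2u)}$ gives exactly the bound $C\sqrt{t-s}\,e^{-\Delta^2/(2(t-s))}$ you need.
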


\begin{proof}
  Let $\lambda_i$ be given. Consider 
\[  \sup_{t\le 1} \int_0^t\int_{[\lambda_i-\Delta, \lambda_i+ \Delta]^c \cap[0,J]} G_{t-s}(\lambda_i-y) \W(ds dy).\]
The increments of the integral for $s<t$ are  
\begin{align*}
&  \int_0^s\int_{[\lambda_i-\Delta, \lambda_i+ \Delta]^c \cap[0,J]}\Big[ G_{t-r}(\lambda_i-y)-G_{s-r}(\lambda_i-y) \Big]\W(dr dy)\\& \hspace{1in} +\int_s^t\int_{[\lambda_i-\Delta, \lambda_i+ \Delta]^c \cap[0,J]}G_{t-r}(\lambda_i-y)\W(dr dy)
\end{align*}

Clearly the second moment of the second term is 
\[\int_s^t\int_{[\lambda_i-\Delta, \lambda_i+ \Delta]^c \cap[0,J]}G^2_{t-r}(\lambda_i-y)dr dy \le C \exp\left(-\frac{\Delta^2}{2t}\right)\sqrt{t-s}. \]
Now, using Lemma \ref{lem:secterm}  we have that the second moment of the first term is
\begin{align*}
\int_0^s\int_{[\lambda_i-\Delta, \lambda_i+ \Delta]^c \cap[0,J]} 
  & \Big[ G_{t-r}(\lambda_i-y)-G_{s-r}(\lambda_i-y) \Big]^2 dy dr\\
    &\leq  C\exp\left(-\frac{\Delta^2}{2t}\right)\sqrt{t-s}\cdot
       \big(|\ln(t-s)|+1\big)
\end{align*}
for some $C>0$.  As a consequence of the above bounds, we can use a 
chaining argument as in Lemma 3.4 of \cite{athr-jose-muel} to show
\bes
 \mP\left(\sup_{t\le 1} \Big\vert \N(t,{\lambda}_i) - \vv_i^{(\Delta)}(t)\Big\vert > r \right)  
 \le c\exp\left(-C r^2 \exp\left(\frac{\Delta^2}{2} \right) \right).
\ees
This completes the proof of Lemma \ref{lem:N-v}.  
\end{proof}

Recall the lower Minkowski dimension of a set $A$ is given by 
\[ \underline{\textnormal{dim}}_M(A) := \liminf_{\epsilon\to 0} \frac{\log N_{\epsilon}(A)}{\log(\epsilon^{-1})},\]
where $N_{\epsilon}(A)$ is the minimum number of balls of radius $\epsilon$ needed to cover $A$. 

Let 
\[ \tilde{\N}(t,x) = \int_0^t \int_{\R} p({t-s},x-y) \W(dy ds),\]
where $p(\cdot,\cdot)$ is the heat kernel on $\R$ given by \eqref{eq:hkr}.  
We have the following.  
\begin{lem}
\[  \underline{\textnormal{dim}}_M (\tilde \N(\cdot,x)) \ge 4\wedge d.\]
\end{lem}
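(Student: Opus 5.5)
The claim is about the image (range) set $\{\tilde\N(t,x): 0\le t\le 1\}\subset\R^d$ for a fixed $x$. The plan is to bound the lower Minkowski dimension of this range from below by the Hausdorff dimension, since $\underline{\textnormal{dim}}_M(A)\ge \dim_H(A)$ for any bounded set $A$. It then suffices to show $\dim_H \tilde\N([0,1],x)\ge 4\wedge d$ almost surely.

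The key input is that the temporal process $t\mapsto \tilde\N(t,x)$ is a centered Gaussian process in $\R^d$ with i.i.d. coordinates. Its incremental variance is comparable to $|t-s|^{1/2}$: there are constants $0<c<C$ with
\[ c|t-s|^{1/2}\le \E\big|\tilde N(t,x)-\tilde N(s,x)\big|^2\le C|t-s|^{1/2}, \qquad s,t\in[0,1], \]
where $\tilde N$ denotes a single coordinate. The upper bound is the standard heat-kernel computation. The lower bound follows from the explicit formula for a single coordinate,
\[ \E\big|\tilde N(t,x)-\tilde N(s,x)\big|^2 = \tfrac{1}{\sqrt\pi}\big(\sqrt t+\sqrt s-\sqrt{2}\sqrt{t+s}\big)+\tfrac{1}{\sqrt{\pi}}\sqrt{2(t-s)} , \]
or, more simply, from the contribution $\int_s^t\int_\R p(t-r,x-y)^2\,dy\,dr = c\sqrt{t-s}$, which is a lower bound because the two pieces of the increment are independent. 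So the process is Gaussian with "index" $H=1/4$, like fractional Brownian motion.

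Next I would run the energy (Frostman) method. Let $\mu$ be the occupation measure, $\mu(A)=\int_0^1 \mathbf 1_A(\tilde\N(t,x))\,dt$. For $\gamma<4\wedge d$, I would show
\[ \E\int\int |\mathbf z-\mathbf w|^{-\gamma}\mu(d\mathbf z)\mu(d\mathbf w) = \int_0^1\int_0^1 \E\big|\tilde\N(t,x)-\tilde\N(s,x)\big|^{-\gamma}\,ds\,dt<\infty. \]
Since $\tilde\N(t,x)-\tilde\N(s,x)$ is a $d$-dimensional Gaussian vector with i.i.d. coordinates of variance $\sigma^2\ge c|t-s|^{1/2}$, and $\gamma<d$, we have $\E|\cdot|^{-\gamma}=C_{d,\gamma}\sigma^{-\gamma}\le C|t-s|^{-\gamma/4}$. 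This is integrable on $[0,1]^2$ when $\gamma<4$. Finite expected energy then gives, almost surely, a nonzero measure on the range with finite $\gamma$-energy. Frostman's lemma yields $\dim_H\ge\gamma$, and letting $\gamma\uparrow 4\wedge d$ completes the argument.

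The main obstacle is the uniform lower bound on the incremental variance. This is the step that requires care, since the variance of the increment must be bounded below by $c|t-s|^{1/2}$ uniformly in $s,t$, but it is handled by the independence decomposition above. A secondary point is making sure that the Hausdorff-to-Minkowski comparison applies. The range is compact, because the process has continuous paths, so the comparison is valid. The statement holds almost surely.
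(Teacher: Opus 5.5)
Your proposal is correct and takes the same route as the paper. Both use the two-sided bound $\E|\tilde N(t,x)-\tilde N(s,x)|^2\asymp\sqrt{t-s}$ to get finite expected $\gamma$-energy of the occupation measure for every $\gamma<4\wedge d$, and hence $\dim_H\ge 4\wedge d$ and $\underline{\textnormal{dim}}_M\ge 4\wedge d$. You just spell out what the paper leaves implicit: the independence argument for the variance lower bound, the Frostman step, and the comparison $\dim_H\le\underline{\textnormal{dim}}_M$.
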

\begin{proof} We first observe that $\tilde \N(t,x)-\tilde\N(s,x)$ is a Gaussian random variable with variance bounded above and below by constant multiplies of $\sqrt{t-s}$.  As in the proof of Lemma 3.7 in \cite{ajm23}, observe that for $0<\alpha<4\wedge d$
\[\E_0 \int_0^{t_0} \int_{0}^{t_0} \frac{dt ds}{|\tilde\N(t,x) -\tilde\N(s,x)|^{\alpha}} <\infty.\]
 Therefore $\underline{\text{dim}}_M(\tilde\N(\cdot,x)) \ge 4\wedge d$.
\end{proof}

Let $\mathscr{S}\left(a; \vv_i^{(\Delta)}(\cdot)\right)$ be the sausage of radius $a$ around $\vv_i^{(\Delta)}(\cdot)$. It follows from the arguments of Lemma 3.8 of \cite{ajm23} that 
\begin{lem}\label{lem:sau:vol} Fix $0<\gamma<1$. There exists positive constants $C_{\gamma}, C_{10}>0$ such that for all $J \geq \frac{2}{\alpha_*} \sqrt{\frac{C_{10}}{\mid \log a \mid}}$
\[ \mP_0\left(\left|\mathscr{S}\left(\frac{a}{4}; \vv_i^{(\Delta)}(\cdot )\right)\right|\ge C_{\gamma} a^{{ d- 4\wedge d +\gamma}}\right)\ge \frac45.\]
\end{lem}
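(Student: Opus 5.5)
Throughout, write $r:=a/4$ and $\beta:=4\wedge d$, and let $\bar{\vv}_i^{(\Delta)}(t):=\int_0^t\int_{\lambda_i-\Delta}^{\lambda_i+\Delta}p(t-s,\lambda_i-y)\,\W(ds\,dy)$ be the full-line analogue of $\vv_i^{(\Delta)}$. By translation invariance we may take $\lambda_i=0$.

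The approach is a second-moment (Paley--Zygmund) argument for the sausage volume, as in Lemma 3.8 of \cite{ajm23}, applied to the localized process $\vv_i^{(\Delta)}$. We compare it first with $\tilde\N(\cdot,\lambda_i)$. The increments of $\vv_i^{(\Delta)}$ are centred Gaussian. For $0\le s<t\le 1$ we claim that $\E|\vv_i^{(\Delta)}(t)-\vv_i^{(\Delta)}(s)|^2$ is bounded above and below by constant multiples of $\sqrt{t-s}$, uniformly in $i$ and $J$. To see this, compare $G$ with $p$ using the bound $G_t(x-y)\le Cp(t,x-y)$ valid for $\lambda_i\in[\alpha_*J/4,3\alpha_*J/4]$. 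The part of the variance coming from $[\lambda_i-\Delta,\lambda_i+\Delta]^c$ is of order $\sqrt{t-s}\,e^{-\Delta^2/2}$, exactly as in Lemma \ref{lem:secterm}. The condition $J\ge \frac{2}{\alpha_*}\sqrt{C_{10}/|\log a|}$ is what guarantees that $\Delta$ is large and that the window lies inside $[0,J]$, so this error is negligible compared with $\sqrt{t-s}$. Hence $\vv_i^{(\Delta)}$ behaves like a fractional Brownian motion of index $1/4$ in $\R^d$, in the sense of two-sided variance bounds.

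Next we discretize the sausage. Let $t_k=k r^4$ for $k=0,1,\dots,M$ with $M\approx r^{-4}$. Then $\mathscr{S}(r;\vv_i^{(\Delta)})\supseteq\bigcup_k B(\vv_i^{(\Delta)}(t_k),r)$. Bounding the union from below by inclusion--exclusion, or better by $|\bigcup_k B_k|\ge (\sum_k|B_k|)^2/\sum_{k,l}|B_k\cap B_l|$, reduces everything to estimating
\[
\E\sum_{k,l}|B_k\cap B_l|\le C r^d\sum_{k,l}\mP\bigl(|\vv_i^{(\Delta)}(t_k)-\vv_i^{(\Delta)}(t_l)|\le 2r\bigr).
\]
Using the Gaussian small-ball bound together with the variance lower bound, this probability is at most $C\min\{1,(r/|t_k-t_l|^{1/4})^d\}$. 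Summing gives
\[
\sum_{k,l}\le C M\sum_{m\le M}\min\{1,(m^{1/4})^{-d}\}.
\]
For $d<4$ the inner sum is of order $M^{1-d/4}$. For $d>4$ it is bounded, and for $d=4$ it is of order $\log M$; this logarithm is why $\gamma>0$ is needed. Consequently $\E[\text{overlap}]\le C r^{d}M^{2-\beta/4}r^{-\gamma}$, while $\sum_k|B_k|\asymp Mr^d$.

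The conclusion then comes from Cauchy--Schwarz and Markov. With probability at least $4/5$ the overlap sum is at most $5$ times its mean. On that event,
\[
|\mathscr{S}|\ge \frac{(Mr^d)^2}{5C r^d M^{2-\beta/4}r^{-\gamma}}=c\,r^{d}M^{\beta/4}r^{\gamma}=c\,r^{d-\beta+\gamma},
\]
using $M\approx r^{-4}$. This is the claimed bound $C_\gamma a^{d-4\wedge d+\gamma}$ after adjusting constants. An alternative route is to use the lower Minkowski dimension lemma for $\tilde\N$ directly, covering numbers at scale $r$ being at least $r^{-(\beta-\gamma)}$, and then transfer to $\vv_i^{(\Delta)}$ using Lemma \ref{lem:N-v}. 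However, Minkowski dimension gives no uniform-in-probability bound, so the second-moment route is preferable.

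The main obstacle is the uniform lower bound on the increment variance of $\vv_i^{(\Delta)}$. The truncation to $[\lambda_i-\Delta,\lambda_i+\Delta]$ and the periodic kernel $G$ must be shown not to reduce $\E|\vv(t)-\vv(s)|^2$ below $c\sqrt{t-s}$. I would handle this by writing $\vv_i^{(\Delta)}=\tilde\N(\cdot,\lambda_i)-(\text{outer part})+(G-p\text{ correction})$ and bounding the last two terms in $L^2$ by $e^{-c\Delta^2}\sqrt{t-s}$, via Lemma \ref{lem:secterm} and the periodic sum estimate in its proof. This bound is where the largeness of $J$ relative to $a$ enters.
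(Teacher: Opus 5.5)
Your proof is correct, but it takes a different route from the paper. The paper never estimates the sausage of $\vv_i^{(\Delta)}$ directly. It imports $\mP_0\bigl(|\mathscr{S}(\frac{a}{16};\tilde\N(\cdot,x))|\ge Ca^{d-4\wedge d+\gamma}\bigr)\ge\frac67$ for the whole-line process from the covering argument of Lemma 3.8 of \cite{ajm23}. It then transfers this bound twice, using $\sup_t|f(t)-g(t)|\le\epsilon\Rightarrow\mathscr{S}(\rho;f)\subseteq\mathscr{S}(\rho+\epsilon;g)$. The first transfer is to $\N(\cdot,x)$, via the exponentially small difference $G-p$ and a chaining bound. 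The second is to $\vv_i^{(\Delta)}$, via Lemma \ref{lem:N-v}. Along the way the radii go $\frac{a}{16}\to\frac a8\to\frac a4$ and the probabilities go $\frac67\to\frac56\to\frac45$. These transfers are where largeness of $J$ and $\Delta$ relative to $|\log a|$ is used, since $a^2e^{\alpha_*^2J^2/4}$ and $a^2e^{\Delta^2/2}$ must be large.

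Your discretized second-moment argument combines three ingredients: the bound $|\bigcup_kB_k|\ge(\sum_k|B_k|)^2/\sum_{k,l}|B_k\cap B_l|$, Gaussian small-ball estimates at mesh $r^4$, and Markov's inequality. This is essentially the quantitative energy estimate behind the cited covering argument, but applied to the localized process itself. As a result it is self-contained, avoids chaining and both comparison steps, and imposes no condition linking $J$ to $a$. It even removes the $\gamma$-loss when $d\neq4$. The paper's route, in exchange, reuses an existing whole-line result and perturbation estimates that it needs elsewhere anyway.

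Two corrections to your write-up.

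First, the step you call the main obstacle is immediate. The contributions of $[0,s]$ and $[s,t]$ to $\vv_i^{(\Delta)}(t)-\vv_i^{(\Delta)}(s)$ are independent, and $G\ge p$ termwise. So each coordinate has variance at least $\int_s^t\int_{-\Delta}^{\Delta}p(t-r,z)^2\,dz\,dr\ge c\sqrt{t-s}$ once $\Delta\ge1$, and only this lower bound is used. Going through Lemma \ref{lem:secterm} instead is worse. That bound carries a factor $|\ln(t-s)|+1$, so it gives nothing uniform as $t-s\to0$. For $t-s\ge r^4$ it works only if $e^{-\Delta^2}|\log a|$ is small.

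Second, the hypothesis $J\ge\frac{2}{\alpha_*}\sqrt{C_{10}/|\log a|}$ does not make $\Delta=c_0\sqrt{\log J}$ large. What your argument actually needs is $\Delta$ bounded below, and that comes from $J$ being large and the choice of $c_0$.
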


\begin{proof} Using a covering argument as in Lemma 3.8 of \cite{ajm23} give the existence of a $C>0$ depending only on $\gamma$ such that 
\[ \mP_0\left(\left|\mathscr{S}\left(\frac{a}{16}; \tilde\N(\cdot,x)\right)\right|\ge C a^{d- 4\wedge d +\gamma}\right) \ge \frac67.\]
Now for $x\in [\frac{\alpha_*J}{4}, \frac{3\alpha_* J}{4}]$ we have 
\[ \left|p(t,x)- G(t,x)\right|\le C\exp\left(-\frac{\alpha_*^2 J^2}{8t}\right),\]
for some $C>0$.  From this and a chaining  argument as in  Lemma 3.3 and 
Lemma 3.4 of \cite{athr-jose-muel} we can conclude that
\[ \mP_0\left(\sup_{t\le 1}\left|\N(t,x)-\tilde \N(t,x)\right|>\frac{a}{16}\right) \le \exp\left(- Ca^2\exp\left(\frac{\alpha_*^2J^2}{4}\right)\right). \]
Therefore by our assumption on $J$, we have
\[ \mP_0\left(\left|\mathscr{S}\left(\frac{a}{8}; \N(\cdot,x)\right)\right|\ge C a^{d- 4\wedge d +\gamma}\right) \ge \frac56.\]
Finally we use Lemma \ref{lem:N-v} with $r= \frac{a}{8}$. The result follows by our assumption on $J$.
\end{proof}

\section{Proof of Theorem \ref{thm1}} \label{sec:thm1}

The proof of Theorem \ref{thm1}(a) follows  the usual method of confining $\uv$ to a region where there are no traps and a comparison with the cost of the region having no traps.

\begin{proof}[Proof of Theorem\ref{thm1}(a)]
  
Let $\alpha >0$. It is easily seen
\begin{equation}\label{eq:st1n} \begin{split} S_1^{\HH, J,\nu} 
% &\ge \mP\Big(\uv(t,x) \notin \mathcal O,\;\; t\in [0,T],\, x\in [0,1]\Big) \\
&\ge \mP(\mathcal B\cap \mathcal C) \\
&= \mP_0(\mathcal B) \mP_1(\mathcal C),
\end{split}\end{equation}
where 
\begin{align*}
\mathcal B &= \left\{\sup_{\stackrel{s\in [0,1]}{x\in [0,J]}}\left|\uv(s,x)\right|\le \alpha\right\},\\
\mathcal C &= \Big\{\text{there are no }\boldsymbol{\xi}_i \text{ in the ball } B\left(\mathbf{0},\alpha+a\right)\Big\}.
\end{align*}
A standard  computation on Poisson processes gives
\be \label{eq:ct} \mP_1(\mathcal C) = \exp\left(-\nu c_d (\alpha+a)^d\right)\ee
where $c_d$ is the volume of the unit ball in $\R^d$.   We next  bound the probability that $\uv$ is confined in ball of radius $\alpha$.
It is easily checked that on the event $\left\{ \|\uv_0\|_{\infty}\le \frac{\alpha}{2}\right\}$ we have
\[ \left|G_t*\uv_0(x)\right|\le \frac{\alpha}{2} \text{ for all } x \in [0,J], t \in [0,1].\]
Therefore 
\begin{align*}
\mP\left(\sup_{\stackrel{t\in [0,1]}{x\in [0,J]}} |\uv(t,x)|\le \alpha\right) \ge \mP\left(\sup_{x\in [0,J]} |\uv_0(x)|\le \frac{\alpha}{2}\right) \cdot \mP\left(\sup_{\stackrel{t\in [0,1]}{x\in [0,J]}} \left| \N(t,x)\right| \le \frac{\alpha}{2}\right)
\end{align*}
Now from Theorem 4.1 of \cite{LS} there is a $C>0$ such that

$$
\mP\left(\sup_{x\in [0,J]} |\uv_0(x)|\le \frac{\alpha}{2}\right) \geq \exp(-\frac{CJ}{\alpha^2}).
$$

The above, Gaussian correlation inequality and Lemma 3.4 of \cite{athr-jose-muel} yields that there is a $C>0$ such that
\begin{align} \label{eq:cbu}
\mP\left(\sup_{\stackrel{t\in [0,1]}{x\in [0,J]}} |\uv(t,x)|\le \alpha\right) & \ge \exp\left(-\frac{CJ}{\alpha^2}\right) \cdot \left[\mP\left(\sup_{\stackrel{t\in [0,1]}{x\in [0,1]}} |\N(t,x)| \le \frac{\alpha}{2}\right)\right]^J  \nonumber \\
& \ge \exp\left(-\frac{CJ}{\alpha^2}\right) \cdot \left[1-\exp(-C\alpha^2)\right]^J \nonumber \\
& \ge \exp\left(-\frac{CJ}{\alpha^2}\right) \cdot \left(1-J \exp(-C\alpha^2)\right) \nonumber\\
&\ge \exp\left(-\frac{CJ}{\alpha^2}\right),
\end{align}
when $\alpha >J^\theta$ for some $\theta >0$.

Using \eqref{eq:ct} and \eqref{eq:cbu} in \eqref{eq:st1n} we have that there is a $C>0$ such that 
\begin{equation*}
\begin{split}
S_1^{\HH, J,\nu} &\ge  C\exp\left(-\nu C (\alpha+a)^d\right) \exp\left(-C\frac{J}{\alpha^2}\right) \\
&\ge C\exp\left(-\nu C 2^da^d\right) \exp\left(-\nu C 2^d\alpha^d -C\frac{J}{\alpha^2}\right).
\end{split}
\end{equation*}

A simple calculus computation shows that the maximum of the exponent in the attained at $\alpha=C \left(\frac{J}{\nu}\right)^{\frac{1}{d+2}}$ for some $C\equiv C(d)>0$ so that 
\be \label{eq:endlba}S_1^{\HH, J,\nu} \ge C_0 \exp\left(-\nu c_d 2^da^d\right) \exp\left( -C_3 J^{\frac{d}{d+2}}\nu^{\frac{2}{d+2}}\right).\ee

Using the scaling relations and ensuring that $\alpha > J^\theta$ for some $\theta >0$ for \eqref{eq:cbu} to hold  we have that there is $C_0, C_1, C_2 >0$ such that  
\[ S_T^{\HH, J, \nu} \ge C_1\exp\left(-C_2\left(\frac{J}{\sqrt{T}}\right)^{\frac{d}{d+2}}\nu^{\frac{2}{d+2}}\right)\]
where $J^{\frac{1}{d+2} -\theta} \ge C_0T^{\frac{1}{4} -\frac{\theta}{2}}$ for some $\theta >0$.
\end{proof}

Before proving Theorem~\ref{thm1}(\textup{b}), we recall some notation.
Let $\X_\cdot$ be  the Brownian Bridge  defined in \eqref{eq:Br_Bridge}, $ \lambda_i$ for $ i=1,2,\cdots,  N^{(3)}_{\alpha_*J}-1 $ be as in \eqref{eq:sp1}, $ \kappa^{(3)}$ be as in \eqref{eq:kappa},  $G, N$ be as in \eqref{eq:noise} and $\vv^{(\Delta)}$ be as defined in \eqref{eq:v}.  

For $t \in [0,1]$ we have
\begin{equation} 
  \label{eq:u:decompose}
\uv(t,\lambda_i) = \X_{\kappa^{(3)}_i} +\vv_i^{(\Delta)}(t) +\left[\left(G_t*\X\right)(\lambda_i) - \X_{\kappa^{(3)}_i}\right] + \left[\N(t,\lambda_i) -\vv_i^{(\Delta)}(t)\right].
\end{equation}

\begin{proof}[Proof of Theorem~\ref{thm1}(\textup{b})]
  Recall the events $ A_i$ and $B$ in \eqref{eq:Aki:b}. Let $c_4, c_5$ and $C_9$ be as in Lemma~\ref{lem:barkappa}. We will define two further events,
  \[ F = \left  \{ \#\{i :  \mbox{$ A_i$'s that occur } \}\geq c_4J^{\frac{d}{d+2} -\frac{c_5}{a^2 \sqrt{\log J}} }\right \} \]
  and
  \[ G =  \bigcup_{i=1}^{\big\lfloor c_4J^{\frac{d}{d+2} -\frac{c_5}{a^2 \sqrt{\log J} }}\big\rfloor} \left\{ \sup_{t\le 1} \Big\vert \N(t,{\lambda}_i) - \vv_i^{(\Delta)}(t)\Big\vert >\frac{a}{16}\right \}.\]
where $\lfloor\cdot\rfloor$ is the floor function.  

From the tail probability estimates for Brownian motion it immediately 
follows that
\[ \mP_0(B^c) \le \exp\left(-\frac{J}{2}\right).\] 
As a consequence of Lemma \ref{lem:barkappa} and the above bound, we have that 
\begin{equation} \label{eq:niabo}
\mP \left( F \cap B \right) 
\geq 1-\exp\left(-C_9 J^{\frac{d}{d+2}-\frac{c_5}{a^2 \sqrt{\log J} }}\right).  
\end{equation}
We will choose  $c_0 \equiv c_0(a,J)$ large enough so that
\begin{itemize}
  \item By Lemma~\ref{lem:dconv:cm},  we that on ${A_i} \cap B$
\begin{equation}
  \label{eq:1} \sup_{t\le 1}\left|(G_t *\X)(\lambda_i) - \X_{\kappa^{(3)}_i}\right|\le \frac{a}{8}.\end{equation} and

\item Using the union bound and Lemma \ref{lem:N-v}, we have that there is a $C>0$ such that
\begin{align}
  \label{eq:3}  \mP_0( G) \le \exp\left(-C J^{\frac{d}{d+2}}\right)
\end{align}
\end{itemize}
Now observe
\begin{align*}
  &  \E\left[\exp\left(-\nu\left| \mathcal{S}\left(a; \left(\uv(t,x)\right)_{\stackrel{ 0\le x\le J,}{0\le t\le 1}}\right)\right|\right)\right]
  \\ & \leq \E\left[\exp\left(-\nu\left| \mathcal{S}\left(a; \left(\uv(t,x)\right)_{\stackrel{\frac{\alpha_*J}{4} \le x\le \frac{3\alpha_*J}{4},}{0\le t\le 1}}\right)\right|\right)\right]\\
  &\\
  & \le \mP(B^c) + \mP(F^c) +\mP_0(G) \\
&\qquad + \E\left[\exp\left(- \nu \left|\bigcup_{i=1}^{c_1J^{\frac{d}{d+2} -\frac{c_2}{a^2 \sqrt{\log J} }}}\mathcal S\left(\frac{a}{4}; \vv_i^{(\Delta)}(\cdot)\right) \right|\right); \, B \cap F \cap G^c\right] \nonumber\\
\end{align*}
Using \eqref{eq:niabo} and \eqref{eq:3} we have
\begin{align*}
&\E\left[\exp\left(-\nu\left| \mathcal{S}\left(a; \left(\uv(t,x)\right)_{\stackrel{ 0\le x\le J,}{t\le 1}}\right)\right|\right)\right] \le \exp\left(-C_9 J^{\frac{d}{d+2} -\frac{c_5}{a^2 \sqrt{\log J} }}\right) + \exp\left(-C J^{\frac{d}{d+2}}\right) \nonumber\\
  & \hspace{1in} + \E\left[\exp\left(- \nu \left|\bigcup_{i=1}^{c_1 
J^{\frac{d}{d+2} -\frac{c_2}{a^2 \sqrt{\log J} }}}\mathcal 
S\left(\frac{a}{4}; \vv_i^{(\Delta)}(\cdot)\right) \right|\right); \, B \cap F \cap G^c\right].   \nonumber
\end{align*}
Next note that the processes $\left(\vv_i^{(\Delta)}(\cdot)\right)_{i\ge 1}$ are mutually independent and independent of the initial profile $\X$. Standard large deviation theory and Lemma \ref{lem:sau:vol} imply that  for any $0 \leq  \gamma \leq 1$ and $J > \max\{1, \geq \frac{2}{\alpha_*} \sqrt{\frac{C_{10}}{\mid \log a \mid } }\}$ we have 
\begin{align*}
  &\E\left[\exp\left(-\nu\left| \mathcal{S}\left(a; \left(\uv(t,x)\right)_{\stackrel{ 0\le x\le J,}{t\le 1}}\right)\right|\right)\right] \le \exp\left(-C_9J^{\frac{d}{d+2} -\frac{c_2}{a^2 \sqrt{\log J} }}\right) + \exp\left(-C J^{\frac{d}{d+2}}\right) \nonumber\\
  & \hspace{3in} + \exp\left(-\nu a^{d-4\wedge d +\gamma} C_\gamma J^{\frac{d}{d+2} -\frac{c_2}{a^2 \sqrt{\log J} }} \right).
\end{align*}
Using the scaling relations, for {$\frac{J}{\sqrt{T}} > \max\{1, \sqrt{T}\frac{2}{\alpha_*} \sqrt{\frac{C_{10}}{\mid \log {\frac{a}{T^{\frac14}}} \mid} }\}$}, 

\begin{align*}
  S_T^{\HH, J, \nu}& \le \exp\left(-C_9 {\left(\frac{J}{\sqrt{T}} \right)}^{\frac{d}{d+2} -\frac{c_5}{\left( {\frac{a}{T^{\frac14}}} \right)^2 \sqrt{\log {\frac{J}{\sqrt{T}}}} }}\right) + \exp\left(-C {\left(\frac{J}{\sqrt{T}} \right)}^{\frac{d}{d+2}}\right) \nonumber \\
  & + \exp\left(-{\nu T^{\frac{d}{4}}} \left({\frac{a}{T^{\frac14}}}\right)^{d-4\wedge d +\gamma} C_\gamma \left( \frac{J}{\sqrt{T}} \right)^{\frac{d}{d+2} -\frac{c_2}{\left (\frac{a}{T^{\frac14}} \right)^2 \sqrt{\log \frac{J}{\sqrt{T}}}} } \right) 
\end{align*}

Therefore for  $T \geq 1$ and equivalently $J \geq  \sqrt{T} \max \left\{1, \sqrt{\frac{4C_{10}}{ \alpha_*^2\mid \log a - \frac{1}{4}\log T\mid  }} \right \}$ we have

\begin{align*}
  S_T^{\HH, J, \nu}& \le \exp\left(-C_9 \left(\frac{J}{\sqrt{T}} \right)^{\frac{d}{d+2} -\frac{c_5\sqrt{ T} }{a^2 \sqrt{\log J - \log \sqrt{T}}}}\right)
\end{align*}
This completes the proof.
\end{proof}

%\bibliography{polymer}
%\bibliographystyle{amsalpha} 
  
\providecommand{\bysame}{\leavevmode\hbox to3em{\hrulefill}\thinspace}
\providecommand{\MR}{\relax\ifhmode\unskip\space\fi MR }
% \MRhref is called by the amsart/book/proc definition of \MR.
\providecommand{\MRhref}[2]{%
  \href{http://www.ams.org/mathscinet-getitem?mr=#1}{#2}
}
\providecommand{\href}[2]{#2}

\end{document}